\newtheorem{prop}[equation]{Proposition}
\newtheorem{thm}[equation]{Theorem}
\newtheorem{cor}[equation]{Corollary}
\newtheorem{lem}[equation]{Lemma}
\theoremstyle{definition}
\newtheorem{defn}[equation]{Definition}
\newtheorem{remark}[equation]{Remark}
\newtheorem{exa}[equation]{Example}
\def\Mor{\mathrm{Mor}}
\def\Aut{\mathrm{Aut}}
\def\A{\ensuremath{\mathcal{A}}}
\def\Bar{\ensuremath{\mathcal{B}}}
\def\C{\ensuremath{\mathcal{C}}}
\def\F{\ensuremath{\mathcal{F}}}
\def\H{\ensuremath{\mathcal{H}}}
\def\L{\ensuremath{\mathcal{L}}}
\def\O{\ensuremath{\mathcal{O}}}
\def\FF{\ensuremath{\mathbb{F}}}
\def\ZZ{\ensuremath{\mathbb{Z}}}
\def\dash{\operatorname{-}}
\def\Tot{\operatorname{Tot}}
\def\CCh{\operatorname{CCh}}
\def\Aut{\mathrm{Aut}}
\def\Hom{\mathrm{Hom}}
\def\CohMack{\mathrm{CohMack}}
\def\im{\operatorname{Im}}
\def\plocz{\ZZ_{(p)}}
\def\Ab{\mathbf{Ab}}
\newcommand{\pcomp}[1]{{#1}^\wedge_p}
\newcommand{\Syl}{\operatorname{Syl}\nolimits}
\newcounter{saveequation}
\date{\today}
\title{A spectral sequence for fusion systems}
\author{Antonio D\'{i}az Ramos}
\address{Departamento de {\'A}lgebra, Geometr{\'\i}a y Topolog{\'\i}a,
Universidad de M{\'a}\-la\-ga, Apdo correos 59, 29080 M{\'a}laga,
Spain.}
\thanks{Partially supported by FEDER-MCI grant MTM2010-18089, Junta de Andaluc{\'\i}a grant FQM-213 and P07-FQM-2863.}
\email{adiaz@agt.cie.uma.es}
\begin{document}

\begin{abstract}We build a spectral sequence converging to the cohomology of a fusion system with a strongly closed subgroup. This spectral sequence is related to the Lyndon-Hochschild-Serre spectral sequence and coincides with it for the case of an extension of groups. Nevertheless, the new spectral sequence applies to more general situations like finite simple groups with a strongly closed subgroup and exotic fusion systems with a strongly closed subgroup. We prove an analogue of a result of Stallings in the context of fusion preserving homomorphisms and deduce Tate's $p$-nilpotency criterion as a corollary.
\end{abstract}

\pagestyle{plain}

%\numberwithin{equation}{prop}
\numberwithin{equation}{section}
\renewcommand{\theequation}{\thesection.\arabic{equation}}
%\alph{thmain}
\renewcommand{\thethmain}{\Alph{thmain}}
\renewcommand{\theenumi}{(\arabic{enumi})}
\renewcommand{\labelenumi}{(\arabic{enumi})}

\maketitle

\section{Introduction.}

Let $K\unlhd G$ be a normal subgroup of the finite group $G$ and consider the extension
$$
K\rightarrow G\rightarrow G/K.
$$
The Lyndon-Hochschild-Serre spectral sequence of this short exact sequence is an important tool to analyze the cohomology of $G$ with coefficients in the $\ZZ G$-module $M$. It has second page $E_2^{n,m}=H^n(G/K;H^m(K;M))$ with $G/K$ acting on $H^m(K;M)$ and converges to $H^{n+m}(G;M)$.

Our aim in this work is to construct a related spectral sequence in the context of fusion systems. This concept was originally introduced by Puig and developed by Broto, Levi and Oliver in \cite{BLO2}, to where we refer the reader for notation. It consists of a category $\F$ with objects the subgroups of a finite $p$-group $S$ and morphisms bounded by axioms that mimic properties of conjugation morphisms. 

In the setup of fusion systems the concept of short exact sequence is an evasive one: Let $\F$ be a fusion system over the $p$-group $S$. For a strongly $\F$-closed subgroup $T$ of $S$ there is a quotient fusion system $\F/T$  \cite[5.10]{controlCraven}. Nevertheless, in general there is no normal fusion subsystem of $\F$ that would play the role of the kernel of the morphism of fusion systems $\F\rightarrow \F/T$ \cite[8.11 ff.]{normalAschbacher}. So the answer to \cite[Conjecture 11]{conjectures} is negative and one cannot expect to construct a Lyndon-Hochschild-Serre spectral sequence for fusion systems. Here we are able to construct a spectral sequence that converges to the cohomology of $\F$, $H^*(\F;M)$, where $M$ is a $\plocz$-module with trivial action of $S$. Recall that $H^*(\F;M)$ is defined \cite[Section 5]{BLO2} as the following subring of $\F$-stable elements in $H^*(S;M)$:
$$
H^*(S;M)^\F=\{z\in H^*(S;M) | res(z)=\varphi^*(z) \textit{ for each $\varphi\in \Hom_\F(P,S)$}\},
$$
where $res:H^*(S;M)\rightarrow H^*(P;M)$ is restriction in cohomology.

\begin{thm}\label{teoremaA}
Let $\F$ be a fusion system over the $p$-group $S$, $T$ a strongly $\F$-closed subgroup of $S$ and $M$ a $\plocz$-module with trivial $S$-action. Then there is a first quadrant cohomological spectral sequence with second page
$$
E_2^{n,m}=H^n(S/T;H^m(T;M))^\F
$$
and converging to $H^{n+m}(\F;M)$.
\end{thm}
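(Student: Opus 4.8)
The plan is to realize the spectral sequence as the homotopy inverse limit, over the centric orbit category $\O^c(\F)$, of the Lyndon--Hochschild--Serre spectral sequences of a functorial family of group extensions, using the sharpness of the subgroup decomposition of $\F$ to keep this limit under control. First I would recall, following \cite[Section 5]{BLO2}, that since $M$ carries the trivial $\plocz[S]$-action one has $H^*(\F;M)\cong\varprojlim_{\O^c(\F)}H^*(-;M)$ and, crucially, $\varprojlim^i_{\O^c(\F)}H^*(-;M)=0$ for $i>0$. Next I would observe that a strongly $\F$-closed $T$ is normal in $S$: applying strong closure to the maps $c_s\colon T\to S$ for $s\in S$ gives $\leftexp{s}{T}\le T$, hence $\leftexp{s}{T}=T$. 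Therefore, for every $\F$-centric $P$ one has $P\cap T\unlhd P$ and a natural isomorphism $P/(P\cap T)\cong PT/T\le S/T$, and strong closure forces each $\varphi\in\Hom_\F(P,Q)$ to restrict to $P\cap T\to Q\cap T$ and to descend to $\bar\varphi\colon P/(P\cap T)\to Q/(Q\cap T)$; thus $\varphi$ is a morphism of the extensions $P\cap T\to P\to P/(P\cap T)$.

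Modelling these extensions at the chain level, say by the normalized double complex with $D^{n,m}(P)=C^n\bigl(P/(P\cap T);C^m(P\cap T;M)\bigr)$, we obtain a functor on $\O^c(\F)^{\mathrm{op}}$: $\Tot D^{\bullet,\bullet}(P)$ computes $H^*(P;M)$, its filtration by the first degree gives the Lyndon--Hochschild--Serre spectral sequence with $E_2^{n,m}(P)=H^n\bigl(P/(P\cap T);H^m(P\cap T;M)\bigr)$, and inner automorphisms of $P$ act trivially on it since they are inner automorphisms of the extension. I would then apply $\operatorname{holim}_{\O^c(\F)}$ — modelled by the totalization of the cochain complex of $\O^c(\F)$ with coefficients in $D^{\bullet,\bullet}(-)$ — to this functorial filtered complex; the outcome is again a first quadrant filtered complex, hence yields a first quadrant cohomological spectral sequence. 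Its abutment is the cohomology of $\operatorname{holim}_{\O^c(\F)}\Tot D^{\bullet,\bullet}(-)$, and since $H^*\bigl(\Tot D^{\bullet,\bullet}(P)\bigr)=H^*(P;M)$ has vanishing higher inverse limits over $\O^c(\F)$, the homotopy limit spectral sequence collapses and identifies this abutment with $\varprojlim_{\O^c(\F)}H^*(-;M)=H^*(\F;M)$; moreover the edge homomorphism $H^*(\F;M)\to E_2^{0,*}$ is restriction to $T$, exactly as classically.

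It remains to compute the $E_2$ page, and here lies the main obstacle. In the spectral sequence of the above filtration, $E_2^{n,m}$ is obtained by applying $\operatorname{holim}_{\O^c(\F)}$ and then taking cohomology, first in the $P\cap T$ and then in the $P/(P\cap T)$ variable; to arrive at $H^n(S/T;H^m(T;M))^\F:=\varprojlim_{\O^c(\F)}E_2^{n,m}(-)$ one must be able to interchange $\operatorname{holim}_{\O^c(\F)}$ with these cohomology operations. This is legitimate precisely when the coefficient functors are $\O^c(\F)$-acyclic, i.e.\ $\varprojlim^i_{\O^c(\F)}\bigl(P\mapsto C^m(P\cap T;M)\bigr)=0$ and $\varprojlim^i_{\O^c(\F)}\bigl(P\mapsto H^m(P\cap T;M)\bigr)=0$ for $i>0$, together with their iterates in the base direction. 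By Shapiro's lemma $H^m(P\cap T;M)\cong H^m\bigl(P;\operatorname{coInd}_{P\cap T}^{P}M\bigr)$, where $\operatorname{coInd}_{P\cap T}^{P}M$ is the permutation $\plocz[P]$-module of $M$-valued functions on $PT/T$; since $T\unlhd S$ this family of modules is compatible with restriction along all $\F$-morphisms, so the required acyclicity should follow by running the subgroup decomposition of $\F$ with such twisted coefficients, or by a dévissage from the trivial-coefficient sharpness recalled above. Establishing this acyclicity is the technical heart of the argument. Finally I would verify that the resulting spectral sequence is the expected one by inspecting edge maps and the $d_2$ differential; conceptually it is the Serre spectral sequence of the natural map $B\F\to B(\F/T)$ of classifying spaces, with $\F/T$ as in \cite[5.10]{controlCraven}, whose homotopy fibre replaces the generally nonexistent ``kernel'' fusion subsystem over $T$ and carries $H^*(T;M)$ with its $\F$-twisted stable structure; for $\F=\F_S(G)$ and $T=K\cap S$ with $K\unlhd G$ this recovers the Lyndon--Hochschild--Serre spectral sequence of $K\unlhd G$.
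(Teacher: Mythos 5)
Your route (homotopy limits over the centric orbit category plus sharpness of the subgroup decomposition) is genuinely different from the paper's, which is purely algebraic: the paper makes $P\mapsto E_k^{n,m}(P)$ into a \emph{cohomological Mackey functor} on $\F$ for every page $k\ge 2$ (transfers are constructed at the level of the double complexes $\Hom_P(\Bar^n_{\overline P}\otimes\Bar^m_P,M)$ and descend to all pages), and then uses the characteristic $(S,S)$-biset of \cite[Proposition 5.5]{BLO2} to show that $(-)^\F$ is an exact functor on cohomological Mackey functors, hence commutes with taking cohomology page by page. However, as written your argument has two genuine gaps. First, a small but real one: your diagram $P\mapsto D^{\bullet,\bullet}(P)$ is not a functor on $\O^c(\F)^{\mathrm{op}}$, because inner automorphisms of $P$ act trivially on the Lyndon--Hochschild--Serre spectral sequence only from the level of (co)homology onward, not on the cochain-level double complex; so the homotopy limit you want to totalize is not defined over the orbit category without a rigidification (or one must work over $\F^c$ itself and then descend).

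The second gap is the decisive one, and you name it yourself: the identification of the $E_2$-page requires the vanishing of $\varprojlim^i_{\O^c(\F)}$ for $i>0$ not only of $P\mapsto H^m(P\cap T;M)$ but of the full $E_2$-functor $P\mapsto H^n\bigl(P/(P\cap T);H^m(P\cap T;M)\bigr)$ (and of the later pages, since the interchange must be iterated through the whole spectral sequence). The sharpness result you quote from \cite{BLO2} is for the functors $P\mapsto H^*(P;M)$ with \emph{constant} coefficients; after Shapiro the coefficients $\operatorname{coInd}_{P\cap T}^{P}M$ vary with $P$, and no d\'evissage from the constant case is supplied --- ``should follow'' is precisely the step that must be proved. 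The paper's Mackey-functor exactness lemma (Lemma \ref{lemmainvariantsexactonMackey} together with Proposition \ref{lemmacochainsforF}) is exactly the acyclicity statement you are missing, proved by exhibiting every $\F$-stable class as in the image of the biset operator $A(\Omega)$; some argument of this strength (existence of compatible transfers on every page plus the $p'$-index count $|\Omega|/|S|\equiv 1 \pmod p$) is needed to close your proof, and without it the claimed form of $E_2$ does not follow.
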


The notation ${}^\F$ for the second page will be fully described in Section \ref{sectioncohomology}, and must be thought as taking $\F$-stable elements in a similar way as explained for $H^*(\F;M)$ above. Consider for each subgroup $P$ of $S$ the Lyndon-Hochschild-Serre spectral sequence of the extension
$$
P\cap T\rightarrow P\rightarrow P/P\cap T \cong PT/T
$$
converging to $H^*(P;M)$. A morphism $\varphi\in \Hom_\F(P,Q)$ induces a morphism $\varphi^*$ between the spectral sequences corresponding to $Q$ and $P$. Hence we have a contravariant functor from $\F$ to the category of spectral sequences. Recall that a morphism in this category from $E'$ to $E''$ %$\{ {E_k'}^{*,*},d'_k\}_{0\leq k\leq \infty}$ to $\{ {E_k''}^{*,*},d''_k\}_{0\leq k\leq \infty}$ 
is a sequence of homomorphisms of differential bigraded $\plocz$-modules, $f_k\colon E'_k\to E''_k$, $k\geq 0$, %, $f_k\colon  ({E'_k}^{*,*},d'_k)\to ({E''_k}^{*,*},d''_k)$, 
such that $H(f_k)\cong f_{k+1}$. The inverse limit spectral sequence or spectral sequence of $\F$-stable elements has $E^{n,m}_2$ entry equal to $H^n(S/T;H^m(T;M))^\F$, i.e.,  the elements $z$ from 
$$
H^n(S/T;H^m(T;M))
$$
such that $\varphi^*(z)=res(z)$, where $\varphi\in \Hom_\F(P,S)$ and $res=\iota^*$ is restriction in cohomology for the inclusion $P\stackrel{\iota}\leq S$.
Hence  $H^*(S/T;H^*(T;M))^\F$ is a differential graded subalgebra of the differential graded algebra $H^*(S/T;H^*(T;M))$ and its differential is just restriction of the differential of the latter. This should be useful in computations. The theorem states that the abutment of this spectral sequence is $H^*(\F;M)$.

For the case of a normal subgroup $K\unlhd G$ and $\F=\F_S(G)$ with $S\in \Syl_p(G)$ we have two spectral sequences converging to $H^*(G;M)$. Here,  $M$ is a $\plocz$-module with trivial $G$-action (and hence trivial $S$-action). On the one hand, we have the Lyndon-Hochschld-Serre spectral sequence associated to $K\rightarrow G\rightarrow G/K$. On the other hand, we have the spectral sequence associated to $\F$ and the strongly $\F$-closed subgroup $T=K\cap S\in \Syl_p(K)$. In Section 5 we prove that the two spectral sequences are isomorphic. Note that, in particular, this shows that the Lyndon-Hochschild-Serre spectral sequence of the extension $K\rightarrow G\rightarrow G/K$ depends only on the intersection of $K$ with a Sylow $p$-subgroup of $G$. 

As an application of the spectral sequence in Theorem \ref{teoremaA} we prove an analogue of a result of Stallings. Meanwhile the original theorem deals with a group homomorphism, here we replace that notion by that of a \emph{fusion preserving} homomorphism. This is a group homomorphism $S_1\rightarrow S_2$ between the Sylow $p$-subgroups of two fusion systems $\F_1$ and $\F_2$ such that morphisms of $\F_1$ are transformed into morphisms of $\F_2$ (see Section \ref{section_Stallings}).

\begin{thm}[{\cite[p.170]{stallings}}]
Let $\F_i$ be a fusion system over the $p$-group $S_i$ for $i=1,2$ and let $\phi\colon S_1\rightarrow S_2$ be a fusion preserving homomorphism. If the induced map in cohomology $H^i(\F_2;\FF_p)\rightarrow H^i(\F_1;\FF_p)$ is isomorphism for $i=1$ and monomorphism for $i=2$ then $S_1/\O^p_{\F_1}(S_1)\cong S_2/\O^p_{\F_2}(S_2)$.
\end{thm}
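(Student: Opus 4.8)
The plan is to mimic Stallings' original argument, replacing the five-term exact sequence of the Lyndon–Hochschild–Serre spectral sequence by the five-term exact sequence of the spectral sequence of Theorem~\ref{teoremaA}, applied to each $\F_i$ with the strongly $\F_i$-closed subgroup $\O^p_{\F_i}(S_i)$ and coefficients $M=\FF_p$. First I would recall that $\F/\O^p_\F(S)$ is the fusion system of the $p$-group $S/\O^p_\F(S)$ with only inner automorphisms, so that $H^*(S/\O^p_\F(S);\FF_p)^{\F}=H^*(S/\O^p_\F(S);\FF_p)$ and in particular the edge terms $E_2^{n,0}=H^n(S/\O^p_\F(S);\FF_p)$ carry no $\F$-stability condition. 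The low-degree exact sequence of a first-quadrant cohomological spectral sequence reads
$$
0\to E_2^{1,0}\to H^1(\F;\FF_p)\to E_2^{0,1}\xto{d_2} E_2^{2,0}\to H^2(\F;\FF_p),
$$
so for each $i$ we obtain
$$
0\to H^1(S_i/\O^p_{\F_i}(S_i);\FF_p)\to H^1(\F_i;\FF_p)\to H^1(\O^p_{\F_i}(S_i);\FF_p)^{\F_i}\xto{d_2} H^2(S_i/\O^p_{\F_i}(S_i);\FF_p)\to H^2(\F_i;\FF_p).
$$

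Next I would check that $\phi$ induces a morphism of these five-term sequences. The map $\phi$ restricts to a fusion preserving homomorphism $\O^p_{\F_1}(S_1)\to \O^p_{\F_2}(S_2)$, hence descends to $\bar\phi\colon S_1/\O^p_{\F_1}(S_1)\to S_2/\O^p_{\F_2}(S_2)$; since $\phi$ sends $\F_1$-morphisms to $\F_2$-morphisms it induces a morphism of the associated functors-to-spectral-sequences of $\F$-stable elements, and therefore a commuting ladder between the two five-term exact sequences, compatible with the given maps $H^i(\F_2;\FF_p)\to H^i(\F_1;\FF_p)$ and with $\bar\phi^*$ on the edge terms. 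A diagram chase then shows: the hypothesis that $H^1(\F_2;\FF_p)\to H^1(\F_1;\FF_p)$ is an isomorphism forces $\bar\phi^*\colon H^1(S_2/\O^p_{\F_2}(S_2);\FF_p)\to H^1(S_1/\O^p_{\F_1}(S_1);\FF_p)$ to be injective (it is a subobject of an injection) and surjective onto the image of $H^1(\F_1;\FF_p)$; combined with the monomorphism hypothesis in degree $2$ and the five-lemma-style chase across the $d_2$ columns, one deduces that $\bar\phi^*$ is an isomorphism in degree $1$, i.e. $H^1(S_2/\O^p_{\F_2}(S_2);\FF_p)\cong H^1(S_1/\O^p_{\F_1}(S_1);\FF_p)$ and, tracking the degree-$2$ information, that the kernel of $\bar\phi$ has trivial abelianization mod $p$, hence is trivial.

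Finally, $H^1(-;\FF_p)$ and $H^2(-;\FF_p)$ for a finite $p$-group detect its Frattini quotient and the relations among a minimal generating set, so an isomorphism on $H^1$ together with the injectivity statement on $H^2$ forces $\bar\phi$ to induce an isomorphism on Frattini quotients and to be injective; since $\bar\phi$ is then a surjection (generators lift) between finite $p$-groups with isomorphic Frattini quotients and trivial kernel, it is an isomorphism $S_1/\O^p_{\F_1}(S_1)\cong S_2/\O^p_{\F_2}(S_2)$. The main obstacle I anticipate is the bookkeeping in the second step: one must verify carefully that $\O^p_\F(S)$ is strongly $\F$-closed so that Theorem~\ref{teoremaA} applies, that the quotient $\F/\O^p_\F(S)$ really is the trivial fusion system on $S/\O^p_\F(S)$ (so the $E_2^{n,0}$ row is untwisted ordinary cohomology of that $p$-group with no stability condition), and that a fusion preserving homomorphism genuinely induces a morphism of spectral sequences of $\F$-stable elements functorially — the rest is the classical Stallings diagram chase transported verbatim.
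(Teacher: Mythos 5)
There is a genuine gap. Your plan is to apply the five-term exact sequence of Theorem \ref{teoremaA} \emph{once}, with $T_i=\O^p_{\F_i}(S_i)$, and chase. The first half of the chase is fine: since $H^1(\F_i;\FF_p)=\Hom(S_i/S_i^p[S_i,\F_i],\FF_p)=\Hom(S_i/\Phi(S_i)\O^p_{\F_i}(S_i),\FF_p)$, the edge maps $H^1(S_i/T_i;\FF_p)\to H^1(\F_i;\FF_p)$ are already isomorphisms, so $g_1$ iso does give $f_1:=\bar\phi^*$ iso on $H^1$ of the quotients. The problem is the degree-two step. Writing $h_1\colon H^1(T_2;\FF_p)^{\F_2}\to H^1(T_1;\FF_p)^{\F_1}$ and $f_2\colon H^2(S_2/T_2;\FF_p)\to H^2(S_1/T_1;\FF_p)$, the four lemma yields $f_2$ mono only if $h_1$ is mono, and yields $h_1$ mono only if $f_2$ is mono --- the chase is circular. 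Unwinding, $h_1$ mono is dual to $\phi$ inducing a surjection $T_1/T_1^p[T_1,\F_1]\to T_2/T_2^p[T_2,\F_2]$, which is nowhere among your hypotheses when $T_i$ is the full hyperfocal subgroup. Moreover, even if $f_1$ iso and $f_2$ mono were in hand, your final step (``kernel has trivial Frattini quotient, hence $\bar\phi$ is an isomorphism'') is exactly the $p$-group case of Stallings' theorem (\cite[7.2.4]{evens}), which is itself proved by an induction you have not performed; the five-term sequence does not directly control the Frattini quotient of $\ker\bar\phi$, only coinvariants of it.

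The missing idea is the induction down the series $S_{i,0}=S_i$, $S_{i,n+1}=S_{i,n}^p[S_{i,n},\F_i]$, which is what the paper does. One checks that each $S_{i,n}$ is strongly $\F_i$-closed and contains $\O^p_{\F_i}(S_i)$, and applies the five-term sequence of Theorem \ref{teoremaA} to $S_{i,n}$ at stage $n$. The inductive hypothesis $S_1/S_{1,n}\cong S_2/S_{2,n}$ (as groups, induced by $\phi$) makes \emph{both} quotient maps $f_1,f_2$ isomorphisms at that stage; then $g_1$ iso and $g_2$ mono let the five lemma run in the correct direction to conclude $h_1$ is an isomorphism, i.e. $S_{1,n}/S_{1,n+1}\cong S_{2,n}/S_{2,n+1}$, whence $S_1/S_{1,n+1}\cong S_2/S_{2,n+1}$. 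Since the series stabilizes at $\O^p_{\F_i}(S_i)$, this gives the theorem. In short: the five-term sequence can only see one step of the series at a time, so you cannot leapfrog directly to the hyperfocal quotient.
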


The \emph{hyperfocal subgrop} of $\F_i$, $O^p_{\F_i}(S_i)$, ($i=1,2$) is defined as follows
$$
O^p_{\F_i}(S_i)=\langle [P,O^p(\Aut_{\F_i}(P))] | P \leq S_i\rangle.
$$
It is the smallest subgroup of $S_i$ such that the quotient of $\F_i$ over that subgroup is a $p$-group \cite{extensions}. Hence, the conclusion of the theorem is that the largest $p$-group quotients of $\F_1$ and $\F_2$ are isomorphic. For instance, when $\F_1$ and $\F_2$ are already $p$-groups, i.e., $\F_i=\F_{S_i}(S_i)$, $i=1,2$, the conclusion is that $S_1$ and $S_2$ are isomorphic. This particular case is a variant of Stallings' result by Evens \cite[7.2.4]{evens}. We can also deduce fusion system versions of another result of Evens and Tate's $p$-nilpotency criterion:

\begin{cor}[{\cite[7.2.5]{evens}}]
Let $\F$ be a fusion system over the $p$-group $S$. If the map $H^2(\F/E^p_\F(S);\FF_p)\rightarrow H^2(\F;\FF_p)$ is a monomorphism then $S/O^p_\F(S)$  is elementary abelian.
\end{cor}

Here, the \emph{elementary focal subgroup} of $\F$ is defined as $E^p_\F(S)=\Phi(S)O^p_\F(S)$ \cite{tateyoshida}, where $\Phi(S)$ is the Frattini subgroup of $S$. The conclusion of this corollary is that the largest $p$-group quotient of $\F$ is elementary abelian.

\begin{cor}[{\cite[Corollary p.109]{Tate}}]
Let $\F$ be a fusion system over the $p$-group $S$. If the restriction map $H^1(\F;\FF_p)\rightarrow H^1(S;\FF_p)$ is an isomorphism then $\F=\F_S(S)$.
\end{cor}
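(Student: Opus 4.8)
The plan is to derive this from the preceding analogue of Stallings' theorem by applying it to the identity map $\phi = \id_S \colon S \to S$, regarded as a homomorphism from $\F_S(S)$ to $\F$. First I would check that $\id_S$ is fusion preserving in this direction: the morphisms of $\F_S(S)$ are exactly the (restrictions of) conjugation maps by elements of $S$, and these are morphisms of $\F$ by the very axioms of a fusion system, so the condition is immediate.

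Next I would examine the cohomological hypotheses required to invoke the Stallings analogue, namely that the induced map $H^i(\F;\FF_p) \to H^i(\F_S(S);\FF_p)$ be an isomorphism for $i = 1$ and a monomorphism for $i = 2$. Here $H^*(\F_S(S);\FF_p) = H^*(S;\FF_p)$, since every class in $H^*(S;\FF_p)$ is $\F_S(S)$-stable (inner conjugations act trivially on cohomology), and under this identification the induced map is simply the inclusion of the subring of $\F$-stable classes into $H^*(S;\FF_p)$. Such an inclusion is a monomorphism in every degree, so the condition in degree $2$ is automatic, and the condition in degree $1$ is precisely our hypothesis. Thus the Stallings analogue applies and yields $S/O^p_{\F_S(S)}(S) \cong S/O^p_{\F}(S)$.

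It then remains to unwind the two sides. For every $P \leq S$ the group $\Aut_{\F_S(S)}(P) = N_S(P)/C_S(P)$ is a $p$-group, so $O^p(\Aut_{\F_S(S)}(P)) = 1$ and hence $[P, O^p(\Aut_{\F_S(S)}(P))] = 1$; therefore $O^p_{\F_S(S)}(S) = 1$. The isomorphism above then forces $|S/O^p_{\F}(S)| = |S|$, i.e. $O^p_{\F}(S) = 1$. Since $\F/O^p_{\F}(S)$ is always a $p$-group fusion system (that is the defining property of the hyperfocal subgroup recalled above), we conclude that $\F = \F/O^p_{\F}(S)$ is a $p$-group fusion system, i.e. $\F = \F_S(S)$, as desired.

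There is little genuine obstacle once the Stallings analogue is available; the only points needing care are purely formal — checking that $\id_S$ qualifies as a fusion preserving homomorphism in the stated direction, and recording that the ``restriction'' map $H^*(\F;\FF_p) \to H^*(S;\FF_p)$ is literally an inclusion of subrings, hence injective, so that the degree-$2$ condition of the Stallings analogue costs nothing. Should one instead want an argument independent of Stallings, one could try to run Tate's original Bockstein argument directly in the spectral sequence of Theorem \ref{teoremaA} with $T = O^p_{\F}(S)$, but this appears more delicate and the route above is the efficient one.
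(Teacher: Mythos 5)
Your proof is correct and follows the same route as the paper: take $\F_1=\F_S(S)$, $\F_2=\F$ with the identity/inclusion as the fusion preserving map, note the degree-one hypothesis and the automatic degree-two injectivity, and invoke the Stallings analogue to get $O^p_{\F}(S)=1$ and hence $\F=\F_S(S)$. Your extra unwinding of $O^p_{\F_S(S)}(S)=1$ and of the final step via the quotient $\F/O^p_\F(S)$ is just a slightly more explicit version of the paper's closing remark that $\F$ then has no $p'$-automorphisms.
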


This last result was already proven in \cite{tateyoshida} using transfer for fusion systems and in \cite{nilpotency} by topological methods. Here the proof mimics Tate's cohomological original proof that relies on the five terms exact sequence associated to the Lyndon-Hochschild-Serre spectral sequence but using instead the spectral sequence of Theorem \ref{teoremaA}.

There are situations where the Lyndon-Hochschild-Serre spectral sequence is not applicable while the spectral sequence from Theorem \ref{teoremaA} can be utilized. For instance, a classical drawback of the Lyndon-Hochschild-Serre spectral sequence is that it cannot be applied to finite simple groups. Nevertheless there are finite simple groups that do have a strongly closed $p$-subgroup. In \cite{FloresFoote}, Flores and Foote classified all finite groups with a strongly closed $p$-subgroup. In particular, they stated which finite simple groups have a stronly closed $p$-subgroup. Notice that even if $\F$ is induced from a non-simple finite group $\F=\F_S(G)$ not every strongly closed $\F$-subgroup $T$ of $S$ is of the form $T=K\cap S$ for some normal subgroup $K\unlhd G$ \cite[Example 6.4]{normalAschbacher}. This describes another circumstances where Lyndon-Hochschild-Serre does not apply but Theorem \ref{teoremaA} does. As final  example of this situation consider exotic fusion system with a strongly closed $p$-subgroup. A family of such exotic fusion systems is described in \cite{drv}, where the authors classified all the fusion systems over $p$-groups of $p$-rank $2$ ($p$ odd). 

This opens a new range of cohomology computations that can be carried out, some of which the author intends to perform in a subsequent paper. The main limitation here is that the spectral sequence from Theorem \ref{teoremaA} requires they knowledge of the Lyndon-Hochschild-Serre spectral sequence of the extension of $p$-groups $T\rightarrow S\rightarrow S/T$, and these computations do not abound. 
%Moreover, in many of the finite simple groups with a strongly closed subgroup, the normalizer of the Sylow p-subgroup controls fusion and hence the spectral sequence from Theorem \ref{teoremaA} reduces to a Lyndon-Hochschild-Serre spectral sequence. 
%The smallest interesting examples where this does not ocurr are $J_3$, $Re(27)$ and $U_3(9)$ with Sylow $3$-subgroups of orders $3^5$, $3^9$ and $3^6$ respectively. For the exotic examples with a strongly closed subgroup the smallest case happens to have Sylow $3$-subgroup of size $3^5$.

\if false {
For trivial coefficients $M$ there is an equivalent geometric construction of this spectral sequence which consists of considering the classifying spaces sitting in the fibration
$$
BK\rightarrow BG\rightarrow B(G/K)
$$
and then applying the Serre spectral sequence \cite{AdemMilgram}.

As a particular example, if $T=O^p_\F(S)$, the hyperfocal subgroup of $\F$, then in \cite{extensions} the authors proved that there is a unique subsystem $\H$ over $O^p_\F(S)$ of $p$-power index and in \cite{tateyoshida} we proved \textbf{(although this is probably folklore for experts?)} that $\H$ is weakly normal in $\F$. In this case the quotient is exactly $\F/O^p_\F(S)=\F_{S/T}(S/T)$, a $p$-group. Moreover, according to \cite[4.4]{extensions} the classifying space of $\H$ is the universal covering space of the classifying space of $\F$, so we get the following fibration as $\pi_1(\pcomp{|\L_\F|})\cong S/O^p_\F(S)$:
$$
S/O^p_\F(S)\rightarrow \pcomp{|\L_\H|}\rightarrow \pcomp{|\L_\F|}.
$$
Because the classifying spaces have the same cohomology (with trivial local coefficients) as $\H$ and $\F$, in this case our analogue of the Lyndon-Hochschild-Serre spectral sequence should coincide with the Leray-Serre spectral sequence of the above fibration \textbf{ (write the same thing for $p'$-power index).}
}
\fi

\begin{remark}
Theorem \ref{teoremaA} holds for the wider class of $\F$-stable $\plocz S$-modules, i.e., for $\plocz S$-modules $M$ such that for any morphism $\varphi\colon P\rightarrow S$ in $\F$ and any $p\in P$ we have $\varphi(p)\cdot m=p\cdot m$. Also, the Lyndon-Hochschild-Serre spectral sequence of $K\unlhd G$ and the spectral sequence from Theorem \ref{teoremaA} for $\F=\F_S(G)$ and $T=S\cap K$ coincide for $G$-stable $\plocz G$-modules, i.e, for $\plocz G$-modules $M$ such that $g^{-1}hg\cdot m=h\cdot m$ for any $h,g\in G$.
\end{remark}

\textbf{Organization of the paper:}
In Section \ref{sectioncohomology}, $\F$-stable elements and Mackey functors are defined and some related results introduced. In Section \ref{section_amackey}, we describe a particular cohomological Mackey functor that will play a central role in the construction of the spectral sequence. In Section \ref{section_construction}, the spectral sequence is built and Theorem \ref{teoremaA}  is proven as Theorem \ref{maintheorem}. In Section \ref{section_examples} we compare the spectral sequence from Theorem \ref{teoremaA} to the Lyndon-Hochschild-Serre spectral sequence and we give an example. In Section \ref{section_Stallings} we prove Stallings' result and its corollaries.
 
\textbf{Acknowledgments:} I would like to thank A. Viruel for several fruitful conversations when developing this paper. Also, I am grateful to P. Symonds for showing me how to prove that in the normal subgroup case the two spectral sequences coincide (Theorem \ref{theorem:sscomparison}).

\section{Cohomology and $\F$-stable elements.}\label{sectioncohomology}

Throughout this section $\F$ denotes a fusion system over the $p$-group $S$. We start introducing some notation: If $A:\F\rightarrow \C$ is a contravariant functor and $\C$ is any category then by $\varphi^*$ we denote the value $A(\varphi)$ for $\varphi$ a morphism in $\F$. For $\varphi=\iota_P^S$, the inclusion of $P$ into $S$, we write $res:={\iota_P^S}^*$. If $\C$ is a complete category then we denote by $A^\F$ the inverse limit over $\F$ of this functor:
$$
A^\F:= \varprojlim_\F A.
$$
For the complete category $\CCh(\Ab)$ of (unbounded) cochain complexes we have the following favourable description of inverse limits:

\begin{lem}\label{leminverslimitstableelements}
Let $A:\F\rightarrow \CCh(\Ab)$ be a contravariant functor. Then:
$$
A^\F=A(S)^\F:=\{z\in A(S) | res(z)=\varphi^*(z) \textit{ for each $\varphi\in \Hom_\F(P,S)$}\}\subseteq A(S).
$$
\end{lem}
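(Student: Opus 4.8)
The plan is to exhibit the subcomplex $A(S)^\F\subseteq A(S)$ of the statement, together with appropriate structure maps, as an inverse limit cone for $A$, and then to check it is universal; since $\CCh(\Ab)$ is complete the limit $\varprojlim_\F A$ exists, and any object realizing its universal property must be it. Two elementary observations about $\F$ do all the work: $\F$ is a finite category (as $S$ is finite), so no foundational issues arise; and, besides being closed under composition, $\F$ contains for every object $P\leq S$ the inclusion $\iota_P^S\in\Hom_\F(P,S)$, with $\iota_S^S=\id_S$. For $z\in A(S)$ I abbreviate $res_P(z):=(\iota_P^S)^*(z)\in A(P)$; then by construction $A(S)^\F$ is the intersection, over all $P\leq S$ and all $\varphi\in\Hom_\F(P,S)$, of the equalizers of the two morphisms $\varphi^*,\,res_P\colon A(S)\to A(P)$, hence a well-defined subcomplex of $A(S)$.

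I would first make $A(S)^\F$ into a cone over $A$ by taking the maps $\pi_P\colon A(S)^\F\hookrightarrow A(S)\xrightarrow{\,res_P\,}A(P)$. The cone condition for a morphism $\varphi\colon P\to Q$ of $\F$ is the computation, for $z\in A(S)^\F$,
$$
\varphi^*\bigl(\pi_Q(z)\bigr)=\varphi^*\bigl((\iota_Q^S)^*(z)\bigr)=(\iota_Q^S\circ\varphi)^*(z)=res_P(z)=\pi_P(z),
$$
where the middle equality is contravariant functoriality and the crucial third equality is the defining $\F$-stability of $z$ applied to the composite morphism $\iota_Q^S\circ\varphi\in\Hom_\F(P,S)$. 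This is really the only place the hypotheses are used: stability tested against every morphism \emph{into} $S$ is automatically compatible with arbitrary composites in $\F$, because $\F$ is closed under composition — which is exactly what will make knowing a compatible family only at $S$ sufficient.

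Finally I would verify universality. Let $(L,\psi_P\colon L\to A(P))_{P\leq S}$ be any cone over $A$. Applying the cone condition to the morphism $\varphi$ gives $\varphi^*\circ\psi_S=\psi_P$ for every $\varphi\in\Hom_\F(P,S)$; in particular, taking $\varphi=\iota_P^S$, we get $res_P\circ\psi_S=\psi_P$, and hence $\varphi^*\circ\psi_S=res_P\circ\psi_S$ for all such $\varphi$, i.e.\ $\psi_S$ equalizes $\varphi^*$ and $res_P$. Therefore $\psi_S$ factors uniquely as $\psi_S=\incl\circ h$ through the intersection of all these equalizers, which is precisely $A(S)^\F$, for a unique $h\colon L\to A(S)^\F$; then $\pi_P\circ h=res_P\circ\psi_S=\psi_P$ for all $P$, and $h$ is the unique map with $\pi_S\circ h=\psi_S$ since $\pi_S=\incl$ (because $\iota_S^S=\id$). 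Thus $\bigl(A(S)^\F,\{\pi_P\}\bigr)$ is the inverse limit and $A^\F=A(S)^\F$. (Equivalently, one checks that the canonical projection $\varprojlim_\F A\to A(S)$ is a monomorphism onto $A(S)^\F$: any compatible family $(z_P)$ satisfies $z_P=res_P(z_S)$, so it is determined by $z_S$, and $z_S$ runs over exactly $A(S)^\F$.) I do not anticipate a genuine obstacle here; the only things to keep straight are the contravariant-functoriality bookkeeping and the observation, noted above, that the stability condition is closed under composition for free.
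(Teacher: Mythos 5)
Your proof is correct. The paper states this lemma without any proof at all, so there is nothing to compare against; your argument is the standard verification that $A(S)^\F$, with the projections $res_P$, satisfies the universal property of the limit, and the two key points — that stability against morphisms into $S$ propagates to arbitrary $\varphi\colon P\to Q$ via the composite $\iota_Q^S\circ\varphi\in\Hom_\F(P,S)$, and that any cone is determined by its component at $S$ — are exactly the content the author leaves to the reader.
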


We call the elements in $A(S)^\F$ the \emph{$\F$-stable elements} in $A(S)$. For such a functor we can consider the cohomology $H^*(A^\F)=H^*(A(S)^\F)$ of $A(S)^\F\in \CCh(\Ab)$. Notice that we also have functors $H^n(A):\F\rightarrow \Ab$ obtained by taking cohomology at degree $n$. Hence we may also consider the inverse limits $H^*(A)^\F=H^*(A(S))^\F$.  We are interested in functors $A$ for which taking $\F$-stable elements and cohomology commute. We prove in this section (Proposition \ref{lemmacochainsforF}) that being a cohomological Mackey functor (Definition \ref{defMackey}) with values in $\plocz$-modules is sufficient. %Hence, for $A:\F\rightarrow \CCh(\Ab)$, studying the %commutativity
%  $$
%  H^*(A(S))^\F=H^*(A(S)^\F)\Leftrightarrow %\varprojlim_\F H^*\circ A=H^*(\varprojlim_\F A)
%  $$ 
%is exactly the same as studying whether
%$$
%\varprojlim_{\T_S(G)} H^*\circ A\circ %\Phi=H^*(\varprojlim_{\T_S(G)} A\circ \Phi).
%$$  

\begin{defn}\label{defMackey}
Let $\F$ be a saturated fusion system over the $p$-group $S$ and let $\A$ be an abelian category. A \emph{cohomological Mackey functor} for $\F$ over $\A$ is a pair of functors $(A,B)\colon \F\rightarrow \A$ with $A:\F\rightarrow \A$ contravariant and $B:\F\rightarrow \A$ covariant such that:
\begin{enumerate}
\item $A(P)=B(P)$ and $A(\varphi)=B(\varphi^{-1})$ for each $P\leq S$ and $\varphi\in \Hom_\F(P,\varphi(P))$.
\item (Identity) $A(c_p), B(c_p)\colon A(P)\rightarrow A(P)$ are the identity morphisms for every $p\in P\leq S$, where $c_p:P\rightarrow P$, $x\mapsto pxp^{-1}$ is conjugation by $p$.

\item (Double coset formula) $A(\iota^P_Q)\circ B(\iota^P_R)=\sum_{x\in Q\backslash P/R} B(\iota^Q_{Q\cap {}^xR})\circ A(\iota^{{}^xR}_{Q\cap {}^xR})\circ A({c_{x^{-1}}}_{|{}^xR})$    for $Q,R\leq P\leq S$, where $Q\backslash P/R$ are the double cosets.

\item (Cohomological) $B(\iota^Q_P)\circ A(\iota^Q_P):A(Q)\rightarrow A(Q)$ is multiplication by $|Q:P|$ for every $P\leq Q\leq S$.
\end{enumerate}

\end{defn}

See \cite{Mackey} for the classical definition of Mackey functor and of cohomological Mackey functor for finite groups.
\begin{remark}
In Definition \ref{defMackey} we have omitted the familiar conditions
\begin{itemize}
\item (Transitivity) $B(\iota_Q^R)\circ B(\iota_P^Q)=B(\iota_P^R)$, $A(\iota_P^Q)\circ A(\iota_Q^R)=A(\iota_P^R)$ for $P\leq Q\leq R\leq S$ and
\item (Conjugation)$B(\iota_P^Q)\circ A(\varphi_{|P})=A(\varphi) \circ B(\iota_{{}^{\varphi(P)}}^{{}^{\varphi(Q)}})$, $B(\varphi_{|P})\circ A(\iota_P^Q)= A(\iota_{{}^{\varphi(P)}}^{{}^{\varphi(Q)}})\circ B(\varphi)$ for $P\leq Q\leq S$, $\varphi\in \Hom_\F(Q,\varphi(Q))$.
\end{itemize}
In fact, they are consequence of the functoriality of $A$ and $B$ and of condition ($1$).
\end{remark}
 We will use several times along the paper that cohomology of finite groups is a cohomological Mackey functor. For a proof of this fact see \cite{brown} for example.

\begin{remark}\label{remarktransferisenough}
If the maps $B(\iota_P^Q)$ for the inclusions $\iota_P^Q:P\rightarrow Q$ with $P,Q\leq S$ are given (these maps are called \emph{transfer})  we can define $B$ as follows: For any morphism $\varphi\in \Hom_\F(P,Q)$ define $B(\varphi):A(P)\rightarrow A(Q)$ by $B(\varphi)=B(\iota_{\varphi(P)}^Q)\circ A({\tilde\varphi}^{-1})$ with $\tilde\varphi:P\stackrel{\cong}\rightarrow \varphi(P)$. Such a $B$ becomes functorial if for any $P\leq Q\leq R$ we have $B(\iota_Q^R)\circ B(\iota_P^Q)=B(\iota_P^R)$ and for any $P\leq Q\stackrel{\varphi}\rightarrow \varphi(Q)$ we have $B(\iota_P^Q)\circ A(\varphi_{|P})=A(\varphi)\circ B(\iota_{\varphi(P)}^{\varphi(Q)})$.
\end{remark}

%We also note that in \cite{Mackeyfusion} Ragnarsson and Stancu give a definition of Mackey %functor in the setup of fusion systems and characterize the $\F$-stable elements for such a %functor as the fixed points of the idempotent associated to $\F$.

Before proving the main result of this section we need to introduce $(G,H)$-bisets: sets with commuting free right $G$-action and free left $H$-action. Every $(G,H)$-biset $\Omega$ can be decomposed into a disjoint union of transitive $(G,H)$-bisets of the form
$$
H\times_\varphi G=H\times G/\sim,
$$
with $K\leq G$, $\varphi:K\rightarrow H$ a monomorphism and
$$
(h,kg)\sim (h\varphi(k),g)
$$
for $h\in H$, $g\in G$ and $k\in K$. A saturated fusion system gives rise to a special type of biset:

\begin{prop}[{\cite[Proposition 5.5]{BLO2}}]\label{BLO2stablebiset}
For any saturated fusion system $\F$ over a $p$-group $S$, there is an $(S,S)$-biset $\Omega$ with the following properties:
\begin{enumerate}[(a)]
\item Each transitive component of $\Omega$
 is of the form $S\times_\varphi S$ for some $P\leq S$ and $\varphi\in \Hom_\F(P,S)$.\label{BLO2stablebiseta}

%\item For each $P\leq S$ and each $\varphi\in \Hom_\F(P,S)$, the $(S,P)$-biset ${}_P\Omega$ obtained by restricting the left action of $S$ to $P$ and the $(S,P)$-biset ${}_\varphi \Omega$ obtained by restricting the left action of $S$ to $P$ via $\varphi$ are isomorphic as $(S,P)$-bisets.\label{BLO2stablebisetb}

\item For each $P\leq S$ and each $\varphi\in \Hom_\F(P,S)$, the $(P,S)$-biset $\Omega_P$ obtained by restricting the right action from  $S$ to $P$ and the $(P,S)$-biset $\Omega_\varphi$ obtained by restricting the right action from $S$ to $P$ via $\varphi$ are isomorphic as $(P,S)$-bisets.\label{BLO2stablebisetb}
\item $|\Omega|/|S|=1 (mod p)$.\label{BLO2stablebisetc}
\end{enumerate}
\end{prop}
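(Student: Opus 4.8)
This is \cite[Proposition 5.5]{BLO2}; I outline how one argues. The setting is the double Burnside ring $A(S,S)$, the Grothendieck group of finite $(S,S)$-bisets free on both sides; it is free abelian on the isomorphism classes $[S\times_\varphi S]$ with $P\le S$ and $\varphi\in\Hom_\F(P,S)$, taken modulo the relation $[S\times_\varphi S]=[S\times_{c_s\varphi c_t}S]$. A genuine biset whose transitive pieces are all of the shape in \ref{BLO2stablebiseta} is precisely a nonnegative integral combination of these classes, so one works inside the submonoid they span. The first step is to reinterpret \ref{BLO2stablebisetb}: for an $(S,S)$-biset $X$, restricting the right action along $\iota_P^S$ and along $\varphi\in\Hom_\F(P,S)$ yields the $(P,S)$-bisets $X_P$ and $X_\varphi$, and \ref{BLO2stablebisetb} requires $X_P\cong X_\varphi$ for all $P$ and all such $\varphi$. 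A straightforward check shows that, for a fixed $X$, the collection of $\varphi$ with $X_P\cong X_\varphi$ is closed under composition and under restriction of the domain, and contains every inclusion; hence, by Alperin's fusion theorem for saturated fusion systems \cite{BLO2}, it suffices to arrange $\Omega_P\cong\Omega_\varphi$ only for $\varphi\in\Aut_\F(P)$ with $P$ ranging over a fixed generating family $\mathcal{X}$ (the fully $\F$-normalized $\F$-essential subgroups together with $S$).

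The hard part will be the construction of $\Omega$ satisfying \ref{BLO2stablebiseta}--\ref{BLO2stablebisetb}, that is, the choice of nonnegative multiplicities $n_{[\varphi]}$ for the transitive pieces. This is the step I expect to be the main obstacle, because the constraints at different subgroups are coupled: by the double coset formula a single transitive piece $S\times_\varphi S$ restricts, for $Q\le S$, to a disjoint union of transitive pieces whose domains lie strictly inside $\dom\varphi$, so one cannot impose the isomorphisms $\Omega_P\cong\Omega_\varphi$ one $\F$-class at a time without disturbing the others. Saturation is what makes it go through: a fully $\F$-normalized representative of each $\F$-class is fully automized and receptive, and these are precisely the properties needed to solve the resulting system of isomorphisms---which becomes triangular over the poset of $\F$-classes once one works downward from $S$---with a nonnegative integral solution. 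Equivalently, one may pass to $\plocz\otimes A(S,S)$, construct there the characteristic idempotent $\omega_\F$ of $\F$ (whose defining relations package \ref{BLO2stablebisetb} together with the requirement that the biset generate $\F$), and then clear denominators; this is the route taken in \cite[\S5]{BLO2}.

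Finally, \ref{BLO2stablebisetc} follows by a count modulo $p$. A transitive piece $S\times_\varphi S$ with $\dom\varphi<S$ contributes $|S\times_\varphi S|/|S|=[S:\dom\varphi]\equiv0\pmod p$, so $|\Omega|/|S|$ is congruent mod $p$ to the total multiplicity of the pieces with $\dom\varphi=S$, namely those indexed by $\Aut_\F(S)/\Inn(S)=\Out_\F(S)$, on which $\Out_\F(S)$ acts regularly, so this total is a multiple of $|\Out_\F(S)|$. Since $S$ is fully automized we have $\Inn(S)\in\Syl_p(\Aut_\F(S))$, hence $p\nmid|\Out_\F(S)|$; replacing $\Omega$ by a suitable $n$-fold disjoint union---which affects neither \ref{BLO2stablebiseta} nor \ref{BLO2stablebisetb}---yields $|\Omega|/|S|\equiv1\pmod p$, as required.
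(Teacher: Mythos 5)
The paper offers no proof of this statement: it is quoted verbatim from \cite[Proposition 5.5]{BLO2}, so there is nothing internal to compare your sketch against. As an outline of the Linckelmann--Webb construction given there, your sketch has the right ingredients: the reduction of condition (b) to automorphisms of subgroups in a conjugation family, the downward induction over $\F$-conjugacy classes (with the correction terms triangular because restricting $S\times_\varphi S$ to a smaller subgroup only produces pieces with smaller domain), and the closing count modulo $p$. Two points need tightening. First, the detour through the characteristic idempotent in $\plocz\otimes A(S,S)$ is not ``the route taken in \cite[\S 5]{BLO2}''; that idempotent is Ragnarsson's later construction \cite{Kari}, whereas BLO2 argue directly with genuine bisets. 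Second, and more substantively, your verification of (c) is incomplete as written: from (a) and (b) alone you only get that the number of transitive pieces with domain $S$ is \emph{some} multiple of $|\Out_\F(S)|$, and that multiple could a priori be $0$ (the empty biset satisfies (a) and (b) vacuously) or divisible by $p$, in which case no disjoint union of copies of $\Omega$ achieves $|\Omega|/|S|\equiv 1 \pmod p$. Condition (c) is not a consequence of (a) and (b); it has to be built into the construction. What saves the argument is that the induction starts from $\coprod_{[\alpha]\in\Out_\F(S)}S\times_\alpha S$ and every correction added afterwards has domain of strictly smaller order, so the top-level multiplicity is exactly $|\Out_\F(S)|$, which is prime to $p$ because $\Inn(S)\in\Syl_p(\Aut_\F(S))$ by saturation; only then does passing to a suitable multiple of $\Omega$ give (c). You should make that step explicit.
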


We call such an $(S,S)$-biset an \emph{$\F$-stable $(S,S)$-biset}. %The biset constructed in \cite[Proposition 5.5]{BLO2} is left $\F$-stable ($b$) but it is easy to see that it is also right $\F$-stable ($c$).
Now let $(A,B)\colon \F\rightarrow \A$ be a cohomological Mackey functor for $\F$ over the abelian category $\A$. For each transitive $(Q,R)$-biset $ R\times_\varphi Q$ with $\varphi\in \Hom_\F(P,R)$, $P\leq Q\leq S$, $R\leq S$, we have the composition 

\begin{equation}\label{transitiveomegaacting}
A(R)\stackrel{A(\varphi)}\rightarrow A(P)=B(P)\stackrel{B(\iota_P^Q)}\rightarrow B(Q)=A(Q).
\end{equation}
For each $(Q,R)$-biset $\Omega$ with
$$
\Omega=\coprod R\times_\varphi Q
$$
we can define a map $A(\Omega):A(R)\rightarrow A(Q)$ by
\begin{equation}\label{omegaacting}
A(\Omega):=\sum B(\iota)\circ A(\varphi).
\end{equation}

\begin{lem}\label{lemmabisets} Let $(A,B):\F\rightarrow \A$ be a cohomological Mackey functor. Then
\begin{enumerate}
\item For each transitive $(Q,R)$-biset  $ R\times_\varphi Q$ the morphism (\ref{transitiveomegaacting}) depends only on the isomorphism class of $ R\times_\varphi Q$ as  $(Q,R)$-biset.
\item For any $(Q,R)$-biset $\Omega$ the morphism (\ref{omegaacting}) depends only on the isomorphism class of $\Omega$ as  $(Q,R)$-biset.
\item For any $(Q,R)$-biset $\Omega$ and any monomorphism $\psi:P\rightarrow Q$ we have 
$$
A(\psi)\circ A(\Omega)=A(\Omega_\psi),
$$
where $\Omega_\psi$ is the $(P,R)$-biset obtained by restricting the right action of $\Omega$ from $Q$ to $P$ via $\psi$.
\item If $\A=\plocz\dash mod$ and $\Omega$ is an $\F$-stable $(S,S)$-biset then
$$
A(S)^\F=\im(A(\Omega):A(S)\rightarrow A(S)).
$$
\end{enumerate}
\end{lem}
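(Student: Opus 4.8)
The plan is to reduce (1)--(3) to the transitive case, matching the Mackey functor axioms against the combinatorics of bisets (the orbit decomposition recalled above, the classification of transitive bisets, and the Mackey double coset formula for bisets), and then to deduce (4) from (2), (3) and Proposition \ref{BLO2stablebiset}.

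First I would establish (1). A transitive $(Q,R)$-biset is of the form $R\times_\varphi Q$, and $R\times_\varphi Q\cong R\times_{\varphi'}Q$ (for $\varphi\colon P\to R$, $\varphi'\colon P'\to R$) exactly when $P'$ is $Q$-conjugate to $P$, say $P'={}^{q}P$, and $\varphi'=c_r\circ\varphi\circ\gamma$ for some $r\in R$, where $\gamma\colon P'\to P$ is the conjugation isomorphism $p'\mapsto q^{-1}p'q$. One then checks that (\ref{transitiveomegaacting}) is unchanged under these two moves: $A(c_r)=\id_{A(R)}$ by the identity condition (2), so post-composition with $c_r$ drops out; and $B(\iota_{P'}^{Q})\circ A(\gamma)=B(\iota_{P}^{Q})$ by the conjugation relation applied to a suitable inner automorphism of $Q$ together with (2). (Recall from the remark after Definition \ref{defMackey} that the transitivity and conjugation relations are consequences of functoriality and condition (1).) Part (2) is then immediate: the decomposition of $\Omega$ into transitive components is unique up to reordering and isomorphism of the pieces, and $A(\Omega)$ is the associated sum, so by (1) it depends only on the isomorphism type of $\Omega$.

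For (3), both sides are additive over disjoint unions, so I reduce to $\Omega=R\times_\varphi Q$ transitive with $\varphi\colon P_0\to R$; writing $\psi=\iota_{\psi(P)}^{Q}\circ\tilde\psi$ with $\tilde\psi$ an isomorphism and absorbing $\tilde\psi$ via the conjugation relation exactly as in (1), I reduce further to $\psi=\iota_{P}^{Q}$. Then $A(\psi)\circ A(\Omega)=A(\iota_{P}^{Q})\circ B(\iota_{P_0}^{Q})\circ A(\varphi)$, and the double coset formula (3) rewrites $A(\iota_{P}^{Q})\circ B(\iota_{P_0}^{Q})$ as $\sum_{x\in P\backslash Q/P_0}B(\iota_{P\cap{}^{x}P_0}^{P})\circ A(\iota_{P\cap{}^{x}P_0}^{{}^{x}P_0})\circ A(c_{x^{-1}}|_{{}^{x}P_0})$; composing on the right with $A(\varphi)$ and using functoriality of $A$ rewrites the $x$-th summand as $B(\iota_{\dom\psi_x}^{P})\circ A(\psi_x)$, where $\psi_x\colon P\cap{}^{x}P_0\to R$ is given by $\psi_x(y)=\varphi(x^{-1}yx)$. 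On the other hand the Mackey formula for bisets identifies $\Omega_{\iota_{P}^{Q}}$ with $\coprod_{x} R\times_{\psi_x}P$ over the same set of double cosets (after replacing representatives by their inverses), so $A(\Omega_{\iota_{P}^{Q}})$ is literally this same sum. Matching the two decompositions term by term is the only real chore, and the step where one must be careful about the left/right conventions for bisets and the direction of the conjugations; everything else is formal.

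Finally, for (4), write $\Omega=\coprod_i S\times_{\varphi_i}S$ with $\varphi_i\in\Hom_\F(P_i,S)$ as in Proposition \ref{BLO2stablebiset}(\ref{BLO2stablebiseta}), so that $A(\Omega)=\sum_i B(\iota_{P_i}^{S})\circ A(\varphi_i)$. For the inclusion $\im A(\Omega)\subseteq A(S)^\F$: for every $\varphi\in\Hom_\F(P,S)$, part (3) gives $A(\iota_{P}^{S})\circ A(\Omega)=A(\Omega_{P})$ and $A(\varphi)\circ A(\Omega)=A(\Omega_\varphi)$, and $\Omega_{P}\cong\Omega_\varphi$ by Proposition \ref{BLO2stablebiset}(\ref{BLO2stablebisetb}), hence $A(\Omega_{P})=A(\Omega_\varphi)$ by (2); thus $\mathrm{res}(A(\Omega)(z))=\varphi^{*}(A(\Omega)(z))$ for all $z\in A(S)$, i.e.\ $A(\Omega)(z)\in A(S)^\F$. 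For the reverse inclusion, take $z\in A(S)^\F$; then $A(\varphi_i)(z)=\mathrm{res}(z)=A(\iota_{P_i}^{S})(z)$ for each $i$ by $\F$-stability, so the cohomological condition (4) gives
$$
A(\Omega)(z)=\sum_i B(\iota_{P_i}^{S})\,A(\iota_{P_i}^{S})(z)=\Bigl(\sum_i |S:P_i|\Bigr)\,z=\frac{|\Omega|}{|S|}\,z ,
$$
the last equality because $|S\times_{\varphi_i}S|=|S|\cdot|S:P_i|$. By Proposition \ref{BLO2stablebiset}(\ref{BLO2stablebisetc}) the integer $|\Omega|/|S|\equiv 1\pmod{p}$, hence is invertible in $\plocz$ since $\A=\plocz\dash mod$; therefore $z=(|\Omega|/|S|)^{-1}A(\Omega)(z)\in\im A(\Omega)$, finishing the argument. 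The one genuine obstacle in the whole proof is the combinatorial bookkeeping in part (3).
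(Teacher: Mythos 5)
Your proposal is correct and follows essentially the same route as the paper: part (1) via the same characterization of isomorphisms of transitive bisets plus the identity/conjugation relations, part (3) by matching the double coset formula of Definition \ref{defMackey} against the biset decomposition of $\Omega_\psi$, and part (4) exactly as in the paper. The only organizational difference is that you reduce to $\psi$ an inclusion before invoking the double coset formula, whereas the paper carries the general $\psi$ through and peels off $\tilde\psi$ by functoriality at the end; this is immaterial.
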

\begin{proof}
Proof of ($1$): The transitive $(Q,R)$-bisets $R\times_{\varphi_1} Q$ and $R\times_{\varphi_2} Q$ with $\varphi_1:P_1\rightarrow R$, $\varphi_2:P_2\rightarrow R$, $P_1,P_2\leq Q$ are isomorphic as $(Q,R)$-bisets if and only if there exist elements $q\in Q$ and $r\in R$ such that the following diagram commutes:
$$
\xymatrix{
P_1\ar[d]^{c_q}\ar[r]^{\varphi_1}& R\ar[d]^{c_r}\\
P_2\ar[r]^{\varphi_2}& R.
}
$$
%If $\varphi_1=c_{g_1}$ and $\varphi_2=c_{g_2}$ with $g_1\in N_G(P_1,R)$ and $g_2\in N_G(P_2,R)$ respectively then the commutativity of this diagram is equivalent to
%$$
%q^{-1}g_2^{-1}rg_1=z\in C_G(P_1).
%$$
%Hence, choosing $g_1z^{-1}\in N_G(P_1,R)$ instead of $g_1$ we can assume that
%$$
%q^{-1}g_2^{-1}rg_1=1.
%$$
Hence both squares in the following diagram commute:
$$
\xymatrix{
A(R)\ar[r]^{A(\varphi_1)}&A(P_1)\ar[r]^{B(\iota_{P_1}^Q)}\ar@<1ex>[d]^{B(c_q)}&A(Q)\ar[d]^{B(c_q)}\\
A(R)\ar[u]^{A(c_r)}\ar[r]^{A(\varphi_2)}&A(P_2)\ar@<1ex>[u]^{A(c_q)}\ar[r]^{B(\iota_{P_2}^Q)}&A(Q).
}
$$
Using ($1$) and ($2$) from Definition \ref{defMackey} one finds out that 
$$
B(\iota_{P_1}^Q)\circ A(\varphi_1)=B(\iota_{P_2}^Q)\circ A(\varphi_2).
$$

Proof of ($2$): any automorphism of $\Omega$ permutes its transitive components via isomorphisms. So we may apply ($1$) from the Lemma to each component.

Proof of ($3$): write $\Omega$ as a disjoint union of transitive $(Q,R)$-bisets $\Omega=\coprod R\times_\varphi Q$. The transitive $(Q,R)$ biset $R\times_\varphi Q$ with $\varphi:K\rightarrow R$, $K\leq Q$ decomposes as a $(P,R)$-biset with $P$ acting via $\psi$ as follows:
$$
R\times_\varphi Q=\bigcup_{q\in \psi(P)\backslash Q/K} R\times_{\varphi\circ c_{q^{-1}}\circ \psi} P,
$$
with $P\geq P\cap\psi^{-1}({}^qK) \stackrel{\psi_{|}}\rightarrow
\psi(P)\cap {}^qK \stackrel{c_{q^{-1}}}\longrightarrow 
K\stackrel{\varphi}\rightarrow R$. Hence,
$$
A(\Omega_\psi)=\sum_\varphi \sum_{q\in \psi(P)\backslash Q/K} B(\iota^P_{P\cap\psi^{-1}({}^qK)})\circ A(\varphi\circ c_{q^{-1}}\circ \psi_{|}).
$$
Using functoriality of $A$ and $B$  we get
$$
A(\Omega_\psi)=A(\tilde{\psi})\circ(\sum_\varphi \sum_{q\in \psi(P)\backslash Q/K} B(\iota^{\psi(P)}_{\psi(P)\cap{}^qK})\circ A(c_{q^{-1}})\circ A(\varphi)),
$$
with $\tilde{\psi}:P\stackrel{\cong}\rightarrow \psi(P)$. Now  the Mackey decomposition ($3$) from Definition \ref{defMackey} gives
$$
A(\Omega_\psi)=\sum_\varphi A(\tilde{\psi}) \circ A(\iota^Q_{\psi(P)})\circ B(\iota^Q_K)\circ A(\varphi)=A(\psi)\circ A(\Omega).
$$

Proof of ($4$): Let $z\in A(S)$. We want to see that $A(\Omega)(z)\in A(S)^\F$. So let $\psi$ be a morphism in $\Hom_\F(P,S)$. Then
$$
A(\psi)(A(\Omega)(z))=(A(\psi)\circ A(\Omega))(z)=A(\Omega_\psi)(z)
$$
by ($3$). By \ref{BLO2stablebiset}(\ref{BLO2stablebisetb}), the $(P,S)$-bisets $\Omega_\psi$ and $\Omega{}_{\iota_P^S}=\Omega_P$ are isomorphic as $(P,S)$-bisets. Then by ($2$) we have $A(\Omega{}_\psi)=A(\Omega{}_{\iota_P^S})$. Hence,
$$
A(\psi)(A(\Omega)(z))=A(\Omega{}_{\psi})(z)=A(\Omega{}_{\iota_P^S})(z)=A({\iota_P^S})(A(\Omega)(z))
$$
by ($3$). Thus $A(\Omega)(z)\in A(S)^\F$.

Now let $z\in \A(S)^\F$. Then
$$
A(\Omega)(z)=\sum B(\iota)(A(\varphi)(z))=\sum B(\iota)(A(\iota)(z))
$$
as $z$ is $\F$-stable. Now by ($4$) of Definition \ref{defMackey} we get 
$$
A(\Omega)(z)=(\sum |S:P|)\cdot z
$$
and by \ref{BLO2stablebiset}(\ref{BLO2stablebisetc}) the number $q=(\sum |S:P|)=|\Omega|/|S|$ is a $p'$-number. So $A(\Omega)(\frac{z}{q})=z$ and hence $z\in \im A(\Omega)$.
\end{proof}

For a fusion system $\F$ over the $p$-group $S$ denote by %$(\plocz\dash mod)^{\F^{op}}$ the abelian category of contravariant functors $\F\rightarrow \plocz\dash mod$ with morphisms the natural transformations. We let 
$\CohMack_{\plocz}(\F)$ the abelian category with objects the cohomological Mackey functors with values in $\plocz\dash mod$ and morphisms the natural transformations commuting with both the contravariant and covariant parts. This means that if $(A,B)$ and $(A',B')$ are cohomological Mackey functors, a morphism $\eta$ between them consists of a morphism of $\plocz$-modules $\eta_P\colon A(P)\rightarrow A'(P)$ for each $P\leq S$ such that for $\varphi\in \Hom_\F(P,Q)$ we have $A'(\varphi)\circ \eta_Q=\eta_P\circ A(\varphi)$ and $\eta_Q\circ B(\varphi)=B'(\varphi)\circ \eta_P$.

\begin{lem} \label{lemmainvariantsexactonMackey}
Let $\F$ be a fusion system over the $p$-group $S$. Then the functor $\CohMack_{\plocz}(\F)\stackrel{(-)^\F}\rightarrow \plocz\dash mod$ sending $(A,B)\mapsto A^\F$ is exact.
%If the sequence 
%$$
%0\Rightarrow (A_1,B_1)\Rightarrow (A_2,B_2)\stackrel{\eta}\Rightarrow (A_3,B_3)\Rightarrow 0
%$$
%is exact in $\CohMack_{\plocz} (\F)$ then the sequence
%$$
%0\rightarrow A_1^\F\rightarrow A_2^\F\stackrel{\eta^\F}\rightarrow A_3^\F\rightarrow 0
%$$
%is exact in $\plocz\dash mod$.

\end{lem}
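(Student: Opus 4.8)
The plan is to exploit the fact, established in Lemma~\ref{lemmabisets}(4), that for a cohomological Mackey functor $(A,B)$ with values in $\plocz\dash mod$ and an $\F$-stable $(S,S)$-biset $\Omega$, the submodule $A(S)^\F$ is exactly the image of the $\plocz$-linear endomorphism $A(\Omega)\colon A(S)\to A(S)$. The key additional observation is that $q := |\Omega|/|S|$ is a $p'$-number by \ref{BLO2stablebiset}(\ref{BLO2stablebisetc}), hence invertible in $\plocz$, and — combining the two computations in the proof of Lemma~\ref{lemmabisets}(4) — the operator $e := q^{-1} A(\Omega)$ satisfies $e|_{A(S)^\F} = \mathrm{id}$ and $\mathrm{im}\, e = A(S)^\F$; therefore $e$ is an idempotent endomorphism of $A(S)$ and $A(S)^\F = \mathrm{im}\, e$ is a direct summand, with complement $\ker e$.

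First I would fix one $\F$-stable $(S,S)$-biset $\Omega$ once and for all (it exists by Proposition~\ref{BLO2stablebiset}) and define for each $(A,B)\in\CohMack_{\plocz}(\F)$ the idempotent $e^{(A,B)} = q^{-1}A(\Omega)$ on $A(S)$. Next I would check naturality: if $\eta\colon (A,B)\to (A',B')$ is a morphism of cohomological Mackey functors, then because $\eta$ commutes with both $A(\varphi)$ and $B(\iota)$ for all $\varphi,\iota$, it commutes with the summands $B(\iota)\circ A(\varphi)$ in the definition~(\ref{omegaacting}) of $A(\Omega)$, hence $\eta_S\circ e^{(A,B)} = e^{(A',B')}\circ\eta_S$. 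Consequently $\eta_S$ restricts to a map $A(S)^\F\to A'(S)^\F$, and this restriction is exactly $(-)^\F$ applied to $\eta$; so $(-)^\F$ is (isomorphic to) the functor $(A,B)\mapsto \mathrm{im}\, e^{(A,B)}$, a natural direct summand of the exact functor $(A,B)\mapsto A(S)$.

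The conclusion then follows from a standard splitting argument: the functor $(A,B)\mapsto A(S)$ is exact (evaluation at $S$ on an abelian category of functors is exact), and on this functor we have a natural idempotent $e$, giving a natural splitting $A(S) = \mathrm{im}\, e^{(A,B)} \oplus \ker e^{(A,B)}$, i.e.\ a natural direct sum decomposition of an exact functor into two subfunctors. A direct summand of an exact functor is exact: given a short exact sequence $0\to (A',B')\to(A,B)\to(A'',B'')\to 0$ in $\CohMack_{\plocz}(\F)$, applying evaluation at $S$ gives a short exact sequence of $\plocz$-modules, compatible with the idempotents $e$, and restricting to the images of $e$ (which are kernels of the complementary idempotents $1-e$, or equivalently retracts) preserves exactness — e.g.\ because $\mathrm{im}\, e^{(A,B)} = (1-e^{(A,B)})^{-1}(0)$ can be computed as a kernel, and kernels commute with the evaluation-at-$S$ functor, or simply because applying the natural idempotent to the whole exact sequence splits it compatibly. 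I expect the only point requiring care — the ``main obstacle,'' though it is minor — is verifying that $e^{(A,B)}$ is genuinely idempotent for every $(A,B)$ and not merely idempotent on $A(S)^\F$: this is precisely where one must use \emph{both} halves of the proof of Lemma~\ref{lemmabisets}(4), namely that $\mathrm{im}\, A(\Omega)\subseteq A(S)^\F$ (so $e$ lands in $A(S)^\F$) together with $e|_{A(S)^\F}=\mathrm{id}$ (so $e$ is the identity on its own image), which together force $e^2 = e$.
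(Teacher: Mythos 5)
Your proposal is correct and rests on the same essential mechanism as the paper's proof: Lemma \ref{lemmabisets}(4) applied to an $\F$-stable $(S,S)$-biset $\Omega$, whose operator $A(\Omega)$ (normalized by the invertible $p'$-number $q=|\Omega|/|S|$) supplies the needed natural surjection onto the $\F$-stable elements. The paper simply element-chases the one nontrivial point, namely that $A_2^\F\to A_3^\F$ is an epimorphism, whereas you package the same operator as a natural idempotent $e=q^{-1}A(\Omega)$ exhibiting $(-)^\F$ as a direct summand of evaluation at $S$ — an equivalent and slightly more structured formulation.
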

\begin{proof}
Let 
$$
0\Rightarrow (A_1,B_1)\Rightarrow (A_2,B_2)\stackrel{\eta}\Rightarrow (A_3,B_3)\Rightarrow 0
$$
be an exact sequence in $\CohMack_{\plocz} (\F)$. We want to prove that $0\rightarrow A_1^\F\rightarrow A_2^\F\stackrel{\eta^\F}\rightarrow A_3^\F\rightarrow 0$ is exact in $\plocz\dash mod$. The non-trivial equality to prove is that the arrow $A_2^\F\rightarrow A_3^\F$ is an epimorphism. So let $z$ be an $\F$-stable element in $A_3(S)$. Fix an $(S,S)$-biset $\Omega$ satisfying the properties of Proposition \ref{BLO2stablebiset}. By Lemma \ref{lemmabisets}(4) there exists an element $z'\in A_3(S)$ with $z=A_3(\Omega)(z')$. By hypothesis, the map
$$ 
A_2(S)\stackrel{\eta_S}\rightarrow A_3(S)
$$
is an epimorphism and hence there exists an element $y'\in A_2(S)$ with $\eta_{S}(y')=z'$. By Lemma \ref{lemmabisets}(4) again we have that $y\stackrel{def}= A_2(\Omega)(y')$ belongs to $A_2^\F$. Because $\eta$ commutes with the covariant and contravariant parts of $(A_2,B_2)$ and $(A_3,B_3)$ is easy to see that
$$
\eta^\F(y)=\eta^\F(A_2(\Omega)(y'))=A_3(\Omega)(\eta^\F(y'))=A_3(\Omega)(z')=z.
$$
\end{proof}

\begin{prop} \label{lemmacochainsforF}
Let $\F$ be a fusion system over $S$ and let $(A,B):\F\rightarrow \CCh(\plocz)$ be a cohomological Mackey functor. Then
$$
H^*(A(S)^\F)\cong H^*(A(S))^\F.
$$
\end{prop}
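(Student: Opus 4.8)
The plan is to regard $A$ as a cochain complex whose terms are objects of the abelian category $\CohMack_{\plocz}(\F)$ and to deduce the statement from the exactness of $(-)^\F$ proved in Lemma~\ref{lemmainvariantsexactonMackey}, via the standard fact that an exact functor commutes with the formation of cohomology of a cochain complex. Recall that $(-)^\F=\varprojlim_\F$ is automatically left exact, so the new content of Lemma~\ref{lemmainvariantsexactonMackey} is its right exactness on $\CohMack_{\plocz}(\F)$.

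The first step is to unwind the hypothesis. Since $(A,B)$ is a cohomological Mackey functor with values in $\CCh(\plocz)$, every $A(\varphi)$ and every $B(\varphi)$ is a morphism of cochain complexes, hence commutes with the differentials. Writing $A^n$, $B^n$ for the degree-$n$ components and $\partial^n\colon A^n\to A^{n+1}$ for the differentials, it follows that each pair $(A^n,B^n)$ is a cohomological Mackey functor with values in $\plocz$-modules (the axioms (1)--(4) of Definition~\ref{defMackey} are equalities of chain maps, hence hold degreewise) and that each $\partial^n$ is a morphism in $\CohMack_{\plocz}(\F)$. Since a sequence in $\CohMack_{\plocz}(\F)$ is exact precisely when it is exact at every $P\leq S$ (this is how the proof of Lemma~\ref{lemmainvariantsexactonMackey} proceeds), the cocycle subobject $Z^n=\ker\partial^n$, the coboundary subobject $\im\partial^{n-1}\subseteq Z^n$ and the cohomology $H^n(A)=Z^n/\im\partial^{n-1}$ all live in $\CohMack_{\plocz}(\F)$ and are computed objectwise; in particular $H^n(A)$ is exactly the functor $P\mapsto H^n(A(P))$ appearing in the statement. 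The one point requiring a direct verification is that these subquotients genuinely carry cohomological Mackey structures: the transfers $B^n(\varphi)$ send cocycles to cocycles and coboundaries to coboundaries because they commute with $\partial$, and axioms (1)--(4) pass to subobjects and quotients because they are identities among the operators $A^n(\varphi)$ and $B^n(\varphi)$. This bookkeeping is the only real work in the proof.

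Granting it, the rest is formal. Applying the exact functor $(-)^\F$ to the short exact sequences
$$
0\to Z^n\to A^n\xrightarrow{\partial^n}\im\partial^n\to 0,\qquad 0\to\im\partial^{n-1}\to Z^n\to H^n(A)\to 0
$$
of $\CohMack_{\plocz}(\F)$ yields exact sequences of $\plocz$-modules. By Lemma~\ref{leminverslimitstableelements}, $(A^n)^\F=A^n(S)^\F$ is the degree-$n$ term of the subcomplex $A(S)^\F\subseteq A(S)$, with differential $(\partial^n)^\F$; left exactness of $\varprojlim_\F$ identifies $(Z^n)^\F$ with the degree-$n$ cocycles of $A(S)^\F$, while the first sequence with $n-1$ in place of $n$ shows that $(\partial^{n-1})^\F$ surjects onto $(\im\partial^{n-1})^\F$, identifying the latter with the degree-$n$ coboundaries of $A(S)^\F$. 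Hence $H^n(A(S)^\F)=(Z^n)^\F/(\im\partial^{n-1})^\F$, which by the second exact sequence is isomorphic to $H^n(A)^\F$; and $H^n(A)^\F=H^n(A(S))^\F$ by the description of $\F$-stable elements of a $\plocz$-module-valued functor noted just after Lemma~\ref{leminverslimitstableelements}. This is the asserted isomorphism.
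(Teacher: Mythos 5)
Your proof is correct and follows exactly the route the paper takes: the paper's own proof is the one-line observation that the statement follows from Lemma \ref{lemmainvariantsexactonMackey} together with the ``well known fact that cohomology commutes with exact functors,'' and your argument is precisely that fact spelled out (cocycles, coboundaries and cohomology are again cohomological Mackey functors computed objectwise, and the exactness of $(-)^\F$ applied to the two standard short exact sequences yields the isomorphism). Nothing to add.
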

\begin{proof}
This is a consequence of the previous Lemma and of the well known fact that cohomology commutes with exact functors.
\end{proof} 

\begin{remark}\label{remarkMackeycochainsdonotknow}
Let $\F$ be a fusion system over the $p$-group $S$ and let $M$ be a trivial $\plocz S$-module. By \cite[Section 5]{BLO2} the cohomology of $\F$ is defined as $H^*(\F;M)=H^*(S;M)^\F$ where $H^*(\cdot;M):\F\rightarrow \plocz$-modules is the cohomological Mackey functor with values $H^*(P;M)$. If one could choose cochains $C^*(\cdot;M):\F\rightarrow \CCh(\plocz)$ such that $C^*(\cdot;M)$ was the contravariant part of a cohomological Mackey functor then Proposition \ref{lemmacochainsforF} would give the computational-purposes formula
$$
H^*(\F;M)=H^*(C^*(S;M)^\F).
$$

In the next section some problems related to the functoriality of cochains will become apparent.
\end{remark}

\section{A Mackey functor.}\label{section_amackey}
Let $\F$ be a fusion system over the $p$-group $S$, $T$ a strongly $\F$-closed subgroup of $S$ and $M$ a $\plocz$-module with trivial $S$-action. In this section we prove that for every $n,m\geq 0$ the functor $H^{n,m}:\F\rightarrow \plocz\dash mod$ sending the subgroup  $P\leq S$ to $H^n(P/P\cap T;H^m(P\cap T;M))$ is the contravariant part of a cohomological Mackey functor $\F\rightarrow \CCh^2(\plocz)$ with values in double (cochain) complexes (Definition \ref{defMackey}). Here, by double complexes we mean the abelian category with objects families  of $\plocz$-modules $\{A^{n,m}\}_{n,m\in \ZZ}$ together with maps $d^h$ (horizontal differential) and $d^v$ (vertical differential)
$$
d^h\colon A^{n,m}\rightarrow A^{n+1,m}\textit{ and }d^v\colon A^{n,m}\rightarrow A^{n,m+1},
$$
such that $d^h d^h=d^vd^v=d^hd^v+d^vd^h=0$. A morphism from  $\{A^{n,m}\}_{n,m\in \ZZ}$ to $\{{A'}^{n,m}\}_{n,m\in \ZZ}$ is a family of maps of $\plocz$-modules $\{A^{n,m}\rightarrow {A'}^{n,m}\}_{n,m\in \ZZ}$ that commute with horizontal and vertical differentials.

For $P\leq S$ denote by $\overline P$ the group $P/P\cap T$. The bar resolutions $\Bar^*_P$ and $\Bar^*_{\overline P}$ for $P$ and $\overline P$ respectively are projective resolutions of the trivial module $\plocz$ over $\plocz P$ and $\plocz {\overline P}$ respectively. Recall that the bar resolution is functorial (covariant) over finite groups and homomorphisms. Define $A^{*,*}(P)$ as the double complex associated to the short exact sequence
$$
0\rightarrow P\cap T\rightarrow P\rightarrow \overline P\cong PT/T\rightarrow 0.
$$
More precisely, for $n\geq 0$ and $m\geq 0$, we define
$$
A^{n,m}(P)=\Hom_P(\Bar_{\overline P}^n\otimes \Bar_P^m,M)
$$
where $P$ acts on  $\Bar_{\overline P}^n\otimes \Bar_P^m$ by $p(y\otimes x)=\overline p y \otimes px$ for $y\in \Bar^n_{\overline P}$ and $x\in \Bar^m_P$.
%\begin{remark}\label{remark_alternativeapq}
%By \cite[Equation (9.9), page 350]{homology} we could have defined  $A^{n,m}(P)$ by $\Hom_{\overline P}(\Bar^n_{\overline P},\Hom_{P\cap T}(\Bar^m_P,M))$ instead. We have chosen the former definition above for simplicity.
%\end{remark}

As the action of $P$ on $M$ is trivial the cochains in $A^{n,m}(P)$ are the homomorphisms $f\in \Hom(\Bar_{\overline P}^n\otimes \Bar_P^m,M)$ such that
$$
f(\overline p y \otimes px)=f(y\otimes x)
$$
for all $y\in \Bar^n_{\overline P}$, $x\in \Bar^m_P$ and $p\in P$.

To obtain a double complex we consider the following horizontal and vertical differentials for $f\in A^{n,m}(P)$
$$
d^h(f)(y\otimes x)=(-1)^{n+m+1}f(d(y)\otimes x)\textit{, $y\in \Bar^{n+1}_{\overline P}$, $x\in \Bar^m_P$ }
$$
and
$$
d^v(f)(y\otimes x)=(-1)^{m+1}f(y\otimes d(x))\textit{, $y\in \Bar^n_{\overline P}$, $x\in \Bar^{m+1}_P$, }
$$
where we are using the differential $d$ of the complexes $\Bar^*_{\overline P}$ and $\Bar^*_P$. We choose the signs 
as in \cite[XI.10.1]{homology} to ensure that $d^hd^v+d^vd^h=0$. We will obtain the functor $H^{n,m}$ by taking vertical cohomology followed by horizontal cohomology in $A^{n,m}$.

To define $A$ on morphisms notice that any morphism $\varphi\in \Hom_\F(P,Q)$ takes $P\cap T$ to $Q\cap T$ as $T$ is strongly $\F$-closed. Hence it induces a homomorphism 
$$
\overline{\varphi}:\overline P\rightarrow \overline Q.
$$ 
Thus for any $\varphi\in \Hom_\F(P,Q)$ we may define 
$$
A^{n,m}(Q)\stackrel{A^{n,m}(\varphi)}\rightarrow A^{n,m}(P)
$$ 
mapping  the cochain $f\in A^{n,m}(Q) $ to the cochain in $A^{n,m}(P)$ that takes $y\in \Bar_{\overline P}^n$ and $x\in \Bar_P^m$ to 
$$
f(\Bar^n(\overline{\varphi})(y)\otimes \Bar^m(\varphi)(x)),
$$
where $\Bar^n(\overline{\varphi})$ and $\Bar^m(\varphi)$ are the usual morphisms between bar resolutions. They commute with differentials and satisfy
$$
\Bar^n(\overline\varphi)(\overline p\cdot y)=\overline\varphi(\overline p)\cdot \Bar^n(\overline\varphi)(y)
$$
for every $y\in \Bar^n_{\overline P}$ and every $\overline p\in \overline P$ and 
$$
\Bar^m(\varphi)(p\cdot x)=\varphi(p)\cdot \Bar^m(\varphi)(x)
$$
for every $x\in \Bar^m_P$ and $p\in P$. It is straightforward that $A^{n,m}(\varphi)(f)\in A^{n,m}(P)$ and that the family of morphisms  $\{A^{n,m}(\varphi)\}_{n,m\geq 0}$ commutes with the horizontal and vertical differentials of the double complexes $A^{*,*}(Q)$ and $A^{*,*}(P)$.

\begin{remark}\label{remark_Cravenquotient}
By definition the fusion system $\F/T$ is defined over the $p$-group $S/T$. For $T\leq P,Q\leq S$ the morphisms in  $\Hom_{\F/T}(P/T,Q/T)$ are those homomorphisms $\overline{\psi}:P/T\rightarrow Q/T$ induced on the quotient from $\psi\in \Hom_\F(P,Q)$.

For $P,Q\leq S$ and $\varphi\in \Hom_\F(P,Q)$ we have a morphism $\overline{\varphi}\colon\overline{P}\rightarrow \overline{Q}$. Then we have a commutative diagram
$$
\xymatrix{
\overline{P}\ar[d]_{\cong}\ar[r]^{\overline{\varphi}}&\ar[d]^ {\cong}\overline{Q}\\
PT/T\ar[r]^{\overline{\varphi}}&QT/T
}
$$
where $\overline{\varphi}$'s are induced by $\varphi$ and where the vertical arrows are the natural isomorphisms. According to \cite[5.10]{controlCraven} bottom morphism $\overline{\varphi}$ belongs to $\F/T$, i.e., there exists $\psi\in \Hom_\F(PT,QT)$ such that the induced map $\overline{\psi}:PT/T\rightarrow QT/T$ coincides with the given one.
\end{remark}

\begin{remark}\label{remarkhpabisfunctor}
The construction of $A^{n,m}$ is clearly functorial and hence so far we have a contravariant functor $A^{*,*}:\F\rightarrow \CCh^2(\plocz)$ with values in double complexes. 
\end{remark}

Now we define $B^{n,m}(P)=A^{n,m}(P)$ for every $P\leq S$ and  $n,m\geq 0$. For each morphism $\varphi\in \Hom_\F(P,Q)$ we will define a morphism of double complexes $B^{n,m}(\varphi):A^{n,m}(P)\rightarrow A^{n,m}(Q)$. This will not make $B$ into a covariant functor $\F\rightarrow \CCh^2(\plocz)$ as the definition depends on a choice of representatives. Nevertheless, $B$ will become functorial once we pass to cohomology.

To define $B^{*,*}(\psi)$ on $\psi\in \Hom_\F(P,Q)$, write $\psi=\iota\circ \tilde{\psi}$, where $\tilde{\psi}:P\rightarrow \psi(P)$ is an isomorphism and $\iota$ is the inclusion $\psi(P)\leq Q$, and set \begin{equation}\label{eqnBongenericmorphism}
B^{*,*}(\psi)=B^{*,*}(\iota)\circ A^{*,*}({\tilde{\psi}}^{-1}).
\end{equation}
So we just need to define $B$ on inclusions.

So let $\iota$ be the inclusion between subgroups $P\leq Q$ of $S$. There are maps of $\plocz P$-chain complexes and of $\plocz \overline P$-chain complexes respectively
$$
\tau_*^{Q,P}:\Bar^*_Q\rightarrow \Bar^*_P\textit{   ,  } \overline \tau^{Q,P}_*:\Bar^*_{\overline Q}\rightarrow \Bar^*_{\overline P}
$$
built as in \cite[(D), page 82]{brown}. More precisely, the map $\tau^{Q,P}_*$ is induced by a map of left $P$-sets $Q\stackrel{\rho}\rightarrow P$ defined as follows: fix a set of representatives for the right cosets $P\backslash Q$, then $\rho(q)=q\overline{q}^{-1}$ where  $\overline{q}$ is the representative with $Pq=P\overline{q}$. The map $\overline \tau^{Q,P}_*$ is defined analogously choosing representatives for the right cosets $\overline P\backslash \overline Q$. These choices of representatives prevent $B^{p,q}$ from being functorial. 
%
%
%We postpone finer details about $\tau_*$ and $\overline{\tau}_*$ until they are needed in the proof of Proposition \ref{hpqisMackey}.

We define the map 
$$
B^{n,m}(\iota)\colon \Hom_P(\Bar_{\overline P}^n\otimes \Bar_P^m,M)\rightarrow \Hom_Q(\Bar_{\overline Q}^n\otimes \Bar_Q^m,M)
$$
by
\begin{equation}\label{defdiagonaltwistedtransfer}
B^{n,m}(\iota)(f)(y\otimes x)=\sum_{w\in Q/P} f({\overline {\tau}^{Q,P}_n}{(\overline w}^{-1}y)\otimes \tau^{Q,P}_m(w^{-1}x)),
\end{equation}
where $w$ runs over a set of representatives of the left cosets $Q/P$. This formula can  be thought as a relative transfer formula for twisted coefficients. Clearly its definition does not depend on the representatives $w$ chosen and $B^{n,m}(\iota)(f)\in A^{n,m}(Q)$. Moreover, $B^{n,m}(\iota)$ commutes with both the horizontal and vertical differentials as $\tau_*$ and $\overline\tau_*$ do and so it is a map of double complexes.

\begin{remark}
By \cite{Sejongindex} there are finite groups $G$ and $\overline{G}$ such that $S$ is a $p$-subgroup of $G$ (not necessarily  a Sylow $p$-subgroup), $\overline{S}=S/T$ is a $p$-subgroup of $\overline G$ (not necessarily  a Sylow $p$-subgroup) and with $\F=\F_S(G)$ and $\F/T=\F_{\overline S}(\overline G)$. Let $B^*_G$ and $B^*_{\overline G}$ be the bar resolutions of $G$ and $\overline G$ respectively. Then we could have defined for $P\leq S$
$$
A^{n,m}(P)=\Hom_P(\Bar^n_{\overline G}\otimes \Bar^m_G,M),
$$
where $P$ acts on $\Bar^n_{\overline G}\otimes \Bar^m_G$ by restricting the actions of $G$ on $\Bar^*_G$ and of $\overline G$ on $\Bar^*_{\overline G}$. This means that $p(y\otimes x )=\overline{p}y\otimes px$ for $p\in P$. In this setup clearly one can define a functorial  $B^{n,m}$ on inclusions. On the other hand, to realize a morphism $\varphi:P\rightarrow Q$ we need to choose $g\in N_G(P,Q)$ with $\varphi=c_g$ and $\overline g\in N_{\overline G}(\overline P,\overline Q)$ with $\overline\varphi=c_{\overline g}$ and then define
$$
A^{n,m}(\varphi)(y\otimes x)=({\overline g}y\otimes gx).
$$
It is clear that in general $A^{n,m}$ defined this way will not be functorial on morphisms. If one could choose $A^{n,m}$ and $B^{n,m}$ such that $(A^{n,m},B^{n,m})\colon \F\rightarrow \CCh^2(\plocz)$ was a Mackey functor then the proof of Theorem \ref{maintheorem} would be simpler. 
\end{remark}

On each double complex $A^{*,*}(P)$ with $P\leq S$ we may take vertical cohomology followed by horizontal cohomology to obtain $H^*(\overline P;H^*(P\cap T;M))$ \cite[Equation (10.2), page 352]{homology}. For any homomorphism $\varphi\in \Hom_\F(P,Q)$ the maps $A^{*,*}(\varphi)$ and $B^{*,*}(\varphi)$ are maps of double complexes and hence they induce maps
$$
H^{n,m}(A)(\varphi): H^n(\overline Q;H^m(Q\cap T;M))\rightarrow H^n(\overline P;H^m(P\cap T;M))
$$
and
$$
H^{n,m}(B)(\varphi): H^n(\overline P;H^m(P\cap T;M))\rightarrow H^n(\overline Q;H^m(Q\cap T;M)).
$$

\begin{lem}\label{lemmahpqadecomposition}
For $\varphi:P\rightarrow Q$ the map $H^{n,m}(A)(\varphi)$ factors as
$$
\xymatrix{
H^n(\overline Q;H^m(Q\cap T;M))\ar[r]^{H^n(\overline\varphi)}&
H^n(\overline P;H^m(Q\cap T;M))\ar[r]^{H^m(\varphi)}&
H^n(\overline P;H^m(P\cap T;M))
}
$$
where 
\begin{itemize}
\item $H^n(\overline\varphi)$ is the map induced by $\overline\varphi$ in cohomology  with $H^m(Q\cap T;M)$-coefficients, 
\item $H^m(\varphi)$ is the map induced by the change of coefficients 
$$
H^m(\varphi):H^m(Q\cap T;M)\rightarrow H^m(P\cap T;M).
$$
This map is a map of $\plocz \overline P$-modules where
$\overline P$ acts on $H^m(Q\cap T;M)$ via $\overline P\stackrel{\overline\varphi}\rightarrow \overline{\varphi(P)}\leq \overline Q$.
\end{itemize}
\end{lem}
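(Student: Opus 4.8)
The plan is to reduce this to the standard functoriality of the Lyndon--Hochschild--Serre double complex, verified at the chain level against the explicit bar resolutions used to build $A^{*,*}$. Unwinding the definition, $A^{n,m}(\varphi)$ is precomposition with the chain map $\Bar^n(\overline\varphi)\otimes\Bar^m(\varphi)\colon \Bar^n_{\overline P}\otimes\Bar^m_P\to\Bar^n_{\overline Q}\otimes\Bar^m_Q$, which $\varphi$-intertwines the two diagonal actions. First I factor this chain map as $\bigl(\Bar^n(\overline\varphi)\otimes\id_{\Bar^m_Q}\bigr)\circ\bigl(\id_{\Bar^n_{\overline P}}\otimes\Bar^m(\varphi)\bigr)$ and correspondingly write $A^{*,*}(\varphi)$ as a composite of morphisms of double complexes
$$
A^{*,*}(Q)\longrightarrow D^{*,*}\longrightarrow A^{*,*}(P),\qquad D^{n,m}:=\Hom_P(\Bar^n_{\overline P}\otimes\Bar^m_Q,M),
$$
where $P$ acts on $\Bar^n_{\overline P}$ through $P\twoheadrightarrow\overline P$ and on $\Bar^m_Q$ through $\varphi\colon P\hookrightarrow Q$ (and trivially on $M$). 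One checks, exactly as in the verification that $A^{*,*}$ is a functor, that both arrows are well defined maps of double complexes and that their composite is $A^{*,*}(\varphi)$.

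Next I dispose of the right-hand arrow $D^{*,*}\to A^{*,*}(P)$, which is precomposition with $\id_{\Bar^n_{\overline P}}\otimes\Bar^m(\varphi)$. Since $\varphi$ is injective, $\Bar^*_Q$ restricted to $P$ along $\varphi$ is a free $\plocz P$-resolution of $\plocz$, so for each fixed $n$ both $\Bar^n_{\overline P}\otimes\Bar^*_P$ and $\Bar^n_{\overline P}\otimes\Bar^*_Q$ are $\plocz P$-projective resolutions of $\Bar^n_{\overline P}$ (tensoring a $\plocz$-free module with a projective resolution gives a projective resolution of it), and $\id\otimes\Bar^m(\varphi)$ is a $\plocz P$-equivariant chain map covering $\id_{\Bar^n_{\overline P}}$. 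Hence after taking vertical cohomology this arrow induces the canonical isomorphism $\Ext^m_{\plocz P}(\Bar^n_{\overline P},M)\to\Ext^m_{\plocz P}(\Bar^n_{\overline P},M)$, i.e.\ the identity; after taking horizontal cohomology it is the identity of $H^n(\overline P;H^m(P\cap T;M))$. So all the content sits in the left-hand arrow.

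Then I analyze $A^{*,*}(Q)\to D^{*,*}$, precomposition with $\Bar^n(\overline\varphi)\otimes\id_{\Bar^m_Q}$. Taking vertical ($m$) cohomology, the standard identification for the LHS double complex (using $\plocz[\overline Q]\cong\operatorname{Ind}_{Q\cap T}^{Q}\plocz$ and Shapiro's lemma, cf.\ \cite[(10.2)]{homology}, and likewise for $\overline P$) rewrites the two columns as $\Hom_{\plocz\overline Q}(\Bar^n_{\overline Q},H^m(Q\cap T;M))$ and $\Hom_{\plocz\overline P}(\Bar^n_{\overline P},H^m(P\cap T;M))$, and I claim the induced map is $g\mapsto\operatorname{res}^{Q\cap T}_{P\cap T}\circ\,g\circ\Bar^n(\overline\varphi)$, where $\overline P$ acts on $H^m(Q\cap T;M)$ through $\overline\varphi$ and $\operatorname{res}^{Q\cap T}_{P\cap T}$ is $\overline P$-equivariant by the identity $\varphi(ptp^{-1})=\varphi(p)\varphi(t)\varphi(p)^{-1}$. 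This follows from naturality of Shapiro's isomorphism together with the Mackey formula for $\Res^{Q}_{P}\operatorname{Ind}_{Q\cap T}^{Q}\plocz$: because $T$ is strongly $\F$-closed, $Q\cap T\unlhd Q$, so every double coset contributes an isomorphic summand $\operatorname{Ind}_{P\cap T}^{P}\plocz$, and the summand singled out by the resolution $\Bar^*_{\overline P}$ via $\Bar^n(\overline\varphi)$ is the undistorted one, on which restriction is plainly $\operatorname{res}^{Q\cap T}_{P\cap T}$. Passing now to horizontal ($n$) cohomology, precomposition with $\Bar^*(\overline\varphi)$ induces $H^n(\overline\varphi)$, postcomposition with $\operatorname{res}^{Q\cap T}_{P\cap T}$ induces the coefficient map $H^m(\varphi)$, these two commute at the cochain level, and so the composite is $H^m(\varphi)\circ H^n(\overline\varphi)$. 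Combined with the previous step this is exactly the asserted factorization of $H^{n,m}(A)(\varphi)$.

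The main obstacle is the chain-level identification in the third step: confirming that, under the standard Shapiro/LHS rewriting of the columns, the combined operation ``restrict scalars along $\varphi$, then precompose with $\Bar^n(\overline\varphi)$'' becomes precisely ``precompose with $\Bar^n(\overline\varphi)$ in the $\overline P$-variable, then apply $\operatorname{res}^{Q\cap T}_{P\cap T}$ on coefficients''. The delicate point is seeing that the relevant Mackey summand carries no conjugation twist, which is where one uses that $T$ is strongly $\F$-closed: it gives $Q\cap T\unlhd Q$, and, together with $\varphi^{-1}\in\F$, it gives that $\overline\varphi$ is injective and that $\varphi(P)\cap T=\varphi(P\cap T)$. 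Everything else --- well-definedness of the two arrows, their being maps of double complexes, the resolution-comparison argument in the second step, and the fact that $H^m(\varphi)$ is a map of $\plocz\overline P$-modules --- is routine.
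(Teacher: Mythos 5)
Your argument is correct, and since the paper's own proof of this lemma is literally the single phrase ``By construction,'' you have simply supplied in full the routine chain-level verification (factoring $A^{*,*}(\varphi)$ through the intermediate double complex $D^{n,m}=\Hom_P(\Bar^n_{\overline P}\otimes\Bar^m_Q,M)$, comparing resolutions on one side and tracking the coefficient change on the other) that the paper leaves implicit. The only quibbles are that the Mackey-decomposition detour in your third step is unnecessary --- the adjunction $\Hom_P(\Bar^n_{\overline P}\otimes V,M)\cong\Hom_{\overline P}(\Bar^n_{\overline P},\Hom_{P\cap T}(V,M))$ identifies the columns directly, with the coefficient map visibly being restriction along $\varphi|_{P\cap T}$ --- and that your $\operatorname{res}^{Q\cap T}_{P\cap T}$ should be read as the map induced by $\varphi|_{P\cap T}\colon P\cap T\to Q\cap T$, since $\varphi$ need not be an inclusion.
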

\begin{proof}
By construction.
\end{proof}
\begin{lem}\label{lemmahpqbdecomposition}
If $P\leq Q$ and $\iota$ denotes the inclusion then the map $H^{n,m}(B)(\iota)$ factors as
$$
\xymatrix{
H^n(\overline P;H^m(P\cap T;M))\ar[r]^{H^m(tr')}&
H^n(\overline P;H^m(Q\cap T;M))\ar[r]^{H^n(tr)}&
H^n(\overline Q;H^m(Q\cap T;M))
}
$$
where 
\begin{itemize}
\item $H^n(tr)$ is the transfer map in cohomology with $H^m(Q\cap T;M)$-coefficients, 
\item $H^m(tr')$ is the map induced by the change of coefficients given by the transfer map in cohomology 
$$
H^m(tr'):H^m(P\cap T;M)\rightarrow H^m(Q\cap T;M).
$$
This map is a map of $\plocz \overline P$-modules where $\overline P$ acts on $H^m(Q\cap T;M)$ via $\overline P\leq \overline Q$.
\end{itemize}
\end{lem}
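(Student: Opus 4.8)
The plan is to split $B^{*,*}(\iota)$ into a map of double complexes realizing a transfer in the vertical (coefficient) direction followed by one realizing a transfer in the horizontal (base) direction, and then pass to cohomology. Intrinsically, since $M$ has trivial $S$-action, formula (\ref{defdiagonaltwistedtransfer}) says
$$
B^{*,*}(\iota)=\operatorname{cor}^Q_P\circ(\overline{\tau}^{Q,P}_*\otimes\tau^{Q,P}_*)^*,
$$
where the first map is pullback along the $\plocz P$-chain map $\overline{\tau}^{Q,P}_*\otimes\tau^{Q,P}_*\colon\Bar^*_{\overline Q}\otimes\Bar^*_Q\to\Bar^*_{\overline P}\otimes\Bar^*_P$ and $\operatorname{cor}^Q_P(g)(y\otimes x)=\sum_{w\in Q/P}g(\overline{w}^{-1}y\otimes w^{-1}x)$ is the corestriction from $P$-invariant to $Q$-invariant cochains. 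Using the adjunction $A^{*,*}(P)\cong\Hom_{\overline P}(\Bar^*_{\overline P},\Hom_{P\cap T}(\Bar^*_P,M))$, $f\leftrightarrow\bigl(y\mapsto g_y\bigr)$ with $g_y(x)=f(y\otimes x)$, I will factor $B^{*,*}(\iota)$ through the double complex $D^{*,*}:=\Hom_{\overline P}(\Bar^*_{\overline P},\Hom_{Q\cap T}(\Bar^*_Q,M))$, where $\overline P$ acts on $\Hom_{Q\cap T}(\Bar^*_Q,M)$ via $\overline P\leq\overline Q$ (through a choice of lifts of $\overline Q$ in $Q$; this is well defined because $Q\cap T$, acting through the trivial module $M$, acts trivially). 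Since $\Bar^*_Q$ restricted to $Q\cap T$ is a $\plocz$-free resolution, the iterated (vertical then horizontal) cohomology of $D^{*,*}$ is $H^n(\overline P;H^m(Q\cap T;M))$, the common source and target of $H^n(tr)$ and $H^m(tr')$.

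The one non-formal step is the combinatorics: because $T\trianglelefteq S$ and $P\leq Q$ we get $Q\cap T\trianglelefteq Q$ and $|Q:P|=|\overline Q:\overline P|\cdot|Q\cap T:P\cap T|$, and a short check shows that if $\{v\}$ is a transversal of $\overline Q/\overline P$ (with fixed lifts in $Q$) and $\{u\}$ a transversal of $Q\cap T$ over $P\cap T$, then $\{vu\}$ is a transversal of $Q/P$ with $\overline{vu}=\overline v$. Substituting into (\ref{defdiagonaltwistedtransfer}) and regrouping the resulting double sum exhibits $B^{*,*}(\iota)=\gamma\circ\beta$, where $\beta\colon A^{*,*}(P)\to D^{*,*}$ is the identity in the horizontal variable and in the vertical variable sends $g_y$ to $\operatorname{cor}^{Q\cap T}_{P\cap T}\bigl(g_y\circ\tau^{Q,P}_*\bigr)$, while $\gamma\colon D^{*,*}\to A^{*,*}(Q)$ is the identity in the vertical variable and in the horizontal variable is the corestriction $h\mapsto\bigl(z\mapsto\sum_v v\cdot h(\overline{\tau}^{Q,P}_n(\overline v^{-1}z))\bigr)$ for $\overline P\leq\overline Q$. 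That $\beta$ and $\gamma$ take values where claimed and commute with both differentials is routine, using that $P$ normalizes $P\cap T$ and $Q\cap T$, the independence of $\operatorname{cor}^{Q\cap T}_{P\cap T}$ of the transversal, triviality of $M$, and that $\tau^{Q,P}_*$, $\overline{\tau}^{Q,P}_*$ are chain maps (so the signs in $d^h,d^v$ are automatically respected). Finally I invoke the standard cochain description of the transfer \cite[(D), page 82]{brown} twice: $\tau^{Q,P}_*$ is a $\plocz[P\cap T]$-chain map of $\plocz$-free resolutions lifting the identity, so $\beta$ induces on vertical cohomology post-composition with $tr'\colon H^m(P\cap T;M)\to H^m(Q\cap T;M)$, hence on horizontal cohomology the map $H^m(tr')$; and $\overline{\tau}^{Q,P}_n$ is a $\plocz\overline P$-chain map lifting the identity, so $\gamma$ induces the transfer $H^n(tr)$ for $\overline P\leq\overline Q$ with coefficients $H^m(Q\cap T;M)$. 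Taking cohomology in $\gamma\circ\beta=B^{*,*}(\iota)$ gives $H^{n,m}(B)(\iota)=H^n(tr)\circ H^m(tr')$.

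I expect the main obstacle to be bookkeeping rather than anything conceptual: verifying that $\{vu\}$ is genuinely a transversal of $Q/P$ (the single place where $P\leq Q$ and $T\trianglelefteq S$ are essential), and keeping the two module structures on $\Hom_{Q\cap T}(\Bar^*_Q,M)$ — the $\overline P$-structure via $\overline P\leq\overline Q$ and the ambient $\overline Q$-structure — carefully apart so that $D^{*,*}$ is identified with the correct target of $H^m(tr')$ and source of $H^n(tr)$. Conceptually the lemma is just the compatibility of the transfer with the Lyndon--Hochschild--Serre filtration, made explicit on the double complex $A^{*,*}$.
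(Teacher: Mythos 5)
Your proposal is correct and follows essentially the same route as the paper: the paper's proof also rests on choosing the transversal of $Q/P$ to be products $q_{k_i}t_j$ of lifts of a transversal of $\overline{Q}/\overline{P}$ with a transversal of $(Q\cap T)/(P\cap T)$, using $\overline{q_{k_i}t_j}=\overline{q_{k_i}}$ to regroup the double sum in (\ref{defdiagonaltwistedtransfer}) and read off the two transfers. You are merely more explicit than the paper about the intermediate double complex and the chain-level factorization, which the paper compresses into ``this coincides with the composition in the statement.''
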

\begin{proof}
Choose representatives $z_i\in\overline Q$ of the left cosets $\overline Q/\overline P$ and representatives $t_j\in Q\cap T$ of the left cosets $(Q\cap T)/(P\cap T)$. Choose also representatives $q_k\in Q$ of the left cosets $\overline{Q}=Q/(Q\cap T)$. Then each $z_i\in \overline{Q}$ is represented as $z_i=\overline{q_{k_i}}$ for a unique $k_i$. It is an exercise to prove that the set of elements of $Q$ $q_{k_i}t_j$ for all $i$ and $j$ is a set of representatives of $Q/P$.
Then we can rewrite Equation (\ref{defdiagonaltwistedtransfer}) as
$$
\sum_{z_i\in \overline Q/\overline P}\sum_{t_j\in (Q\cap T)/(P\cap T)} f({\overline {\tau}^{Q,P}_n}{(\overline {q_{k_i}t_j}}^{-1}y)\otimes \tau^{Q,P}_m((q_{k_i}t_j)^{-1}x)),
$$
Because $t_j\in Q\cap T$ then $\overline {q_{k_i}t_j}=\overline {q_{k_i}}$ and the formula simplifies to
$$
\sum_{z_i\in \overline Q/\overline P}\sum_{t_j\in (Q\cap T)/(P\cap T)} f({\overline {\tau}^{Q,P}_n}{(\overline {q_{k_i}}}^{-1}y)\otimes \tau^{Q,P}_m(t_j^{-1}{q_{k_i}}^{-1}x)).
$$
This coincides with the composition in the statement of the lemma.\end{proof}

Lemma \ref{lemmahpqbdecomposition} proves in particular that the definition of $H^{n,m}(B)(\iota)$ does not depend on the representatives chosen to construct the maps $\tau^{Q,P}_*$ and $\overline{\tau}^{Q,P}_*$. (Although $B^{n,m}(\iota)$ do depends on them.)

\begin{cor}\label{corhpqbisfunctor}
For $n,m\geq 0$ the assignment 
$$
H^{n,m}(B):\F\rightarrow \plocz\dash mod
$$
taking $P$ to $H^n(\overline P;H^m(P\cap T;M))$ and taking $\varphi\in \Hom_\F(P,Q)$ to $H^{n,m}(B)(\varphi)$ is a functor.
\end{cor}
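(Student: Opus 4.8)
The plan is to apply Remark \ref{remarktransferisenough}. From Remark \ref{remarkhpabisfunctor} the assignment $A^{*,*}\colon\F\to\CCh^2(\plocz)$ is a contravariant functor, so taking vertical and then horizontal cohomology yields a contravariant functor $H^{n,m}(A)\colon\F\to\plocz\dash mod$; and passing \eqref{eqnBongenericmorphism} to cohomology gives $H^{n,m}(B)(\psi)=H^{n,m}(B)(\iota)\circ H^{n,m}(A)(\tilde\psi^{-1})$ for every $\psi\in\Hom_\F(P,Q)$, with $\tilde\psi\colon P\xrightarrow{\cong}\psi(P)$ and $\iota$ the inclusion $\psi(P)\leq Q$. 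Hence by Remark \ref{remarktransferisenough} it suffices to check (i) the transitivity $H^{n,m}(B)(\iota_Q^R)\circ H^{n,m}(B)(\iota_P^Q)=H^{n,m}(B)(\iota_P^R)$ for $P\leq Q\leq R\leq S$, and (ii) the compatibility $H^{n,m}(B)(\iota_P^Q)\circ H^{n,m}(A)(\varphi_{|P})=H^{n,m}(A)(\varphi)\circ H^{n,m}(B)(\iota_{\varphi(P)}^{\varphi(Q)})$ for $P\leq Q\leq S$ and $\varphi\in\Hom_\F(Q,\varphi(Q))$; that identities are preserved is immediate from \eqref{defdiagonaltwistedtransfer}.

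The next step is to replace everything in (i) and (ii) by the group-cohomology factorizations of Lemmas \ref{lemmahpqadecomposition} and \ref{lemmahpqbdecomposition}: each $H^{n,m}(B)(\iota_P^Q)$ becomes the cohomology transfer $H^n(\overline P;\,\cdot\,)\to H^n(\overline Q;\,\cdot\,)$ precomposed with the change of coefficients induced by the group transfer $H^m(P\cap T;M)\to H^m(Q\cap T;M)$, and each $H^{n,m}(A)(\varphi)$ becomes the map induced by $\overline\varphi$ on the outer $H^n$ postcomposed with the change of coefficients induced by $\varphi$ on the inner $H^m(-\cap T;M)$ (restriction in both slots when $\varphi$ is an inclusion). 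With these substitutions, (i) becomes a composite of four maps, which one rearranges by transitivity of transfer for ordinary group cohomology applied once to the \emph{inner} chain $H^m(P\cap T;M)\to H^m(Q\cap T;M)\to H^m(R\cap T;M)$ and once to the \emph{outer} chain $H^n(\overline P;\,\cdot\,)\to H^n(\overline Q;\,\cdot\,)\to H^n(\overline R;\,\cdot\,)$, together with the one genuine observation that an outer transfer commutes with an inner change of coefficients: this is the naturality of the transfer $H^n(\overline P;\,\cdot\,)\to H^n(\overline Q;\,\cdot\,)$ in the coefficient $\overline Q$-module, which applies precisely because Lemma \ref{lemmahpqbdecomposition} records that the coefficient-change maps are $\overline P$-module homomorphisms. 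Condition (ii) goes the same way, using in addition Mackey's conjugation formula for ordinary group cohomology applied twice: once to the inner subgroups $P\cap T\leq Q\cap T$ with the isomorphism $\varphi_{|Q\cap T}\colon Q\cap T\xrightarrow{\cong}\varphi(Q)\cap T$, and once to the outer subgroups $\overline P\leq\overline Q$ with the isomorphism $\overline\varphi$; here one uses that $T$ strongly $\F$-closed makes $\overline\varphi$ and $\varphi_{|Q\cap T}$ well defined, and Lemma \ref{lemmahpqadecomposition} again supplies the $\overline P$-equivariance of $H^m(\varphi)$ needed to commute the outer and inner operations past one another.

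The hard part is bookkeeping rather than any new idea: one must keep careful track of which of the quotient groups $\overline P,\overline Q,\overline{\varphi(P)},\overline{\varphi(Q)}$ acts on which coefficient module, so that naturality of transfer (and of induced maps) in the coefficient variable and Mackey's conjugation formula for the inner cohomology are genuinely applicable at each step. Once Lemmas \ref{lemmahpqadecomposition} and \ref{lemmahpqbdecomposition} have been invoked there is no further computation with bar resolutions, double complexes, or the maps $\tau^{Q,P}_*$ and $\overline\tau^{Q,P}_*$; the statement reduces entirely to the standard cohomological-Mackey-functor formalism for finite groups recalled after Definition \ref{defMackey}.
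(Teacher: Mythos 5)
Your proposal is correct, but the verification step is carried out by a genuinely different route from the paper's. Both arguments reduce, via Remark \ref{remarktransferisenough}, Remark \ref{remarkhpabisfunctor} and Equation (\ref{eqnBongenericmorphism}), to checking transitivity of $H^{n,m}(B)$ on inclusions and its compatibility with the contravariant maps under conjugation by a morphism of $\F$. The paper then checks both identities \emph{at the level of cochains}: it makes compatible choices of coset representatives (via the bijections $P\backslash Q\times Q\backslash R\rightarrow P\backslash R$, resp.\ $P\backslash Q\xrightarrow{\varphi}\varphi(P)\backslash\varphi(Q)$), obtains $B^{n,m}(\iota_Q^R)\circ B^{n,m}(\iota_P^Q)=B^{n,m}(\iota_P^R)$ on the nose, and uses the observation following Lemma \ref{lemmahpqbdecomposition} that the induced map on cohomology is independent of these choices. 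You instead work entirely \emph{at the level of cohomology}, substituting the factorizations of Lemmas \ref{lemmahpqadecomposition} and \ref{lemmahpqbdecomposition} and then invoking transitivity and naturality of the group-cohomology transfer and the conjugation axiom, applied separately to the inner coefficient modules $H^m(-\cap T;M)$ and the outer groups $\overline P\leq\overline Q\leq\overline R$. Your route avoids any further manipulation of bar resolutions and the maps $\tau^{Q,P}_*$, at the cost of the equivariance bookkeeping you acknowledge; the paper's route keeps everything inside the double-complex formalism and yields the stronger cochain-level statements that are reused in the proof of the double coset formula in Proposition \ref{hpqisMackey}. One small correction to your justification of the key swap: for naturality of the transfer $H^n(\overline P;-)\to H^n(\overline Q;-)$ in the coefficient variable you need the relevant coefficient-change map to be $\overline Q$-equivariant, not merely $\overline P$-equivariant as you cite from Lemma \ref{lemmahpqbdecomposition}; this holds for the map actually being commuted past the transfer (the inner transfer for the upper pair of subgroups, which is equivariant for the larger quotient since $T\unlhd S$ makes each $R\cap T$ normal in $R$), so the argument stands, but the hypothesis should be stated at the right level.
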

\begin{proof}
By Remarks \ref{remarktransferisenough} and \ref{remarkhpabisfunctor} and Equation (\ref{eqnBongenericmorphism}) it is enough to prove that for  any $P\leq Q\leq R$ we have 
$$
H^{n,m}(B)(\iota_Q^R)\circ H^{n,m}(B)(\iota_P^Q)=H^{n,m}(B)(\iota_P^R)
$$ and for any $P\leq Q\stackrel{\varphi}\rightarrow \varphi(Q)$ we have 
$$
H^{n,m}(B)(\iota_P^Q)\circ H^{n,m}(A)(\varphi_{|P})=H^{n,m}(A)(\varphi)\circ H^{n,m}(B)(\iota_{\varphi(P)}^{\varphi(Q)}).
$$
We can check both conditions at the level of cochains: For the first condition, the definitions (\ref{defdiagonaltwistedtransfer}) of $B^{n,m}(\iota_P^Q)$, $B^{n,m}(\iota_Q^R)$ and $B^{n,m}(\iota_P^R)$ depend upon choices of representatives for the right cosets $P\backslash Q$ and $\overline P\backslash \overline Q$, $Q\backslash R$ and $\overline Q\backslash \overline R$ and $P\backslash R$ and $\overline P\backslash \overline R$ respectively. Fix choices of representatives for the first four right cosets. Then the bijections $P\backslash Q\times Q\backslash R\rightarrow P\backslash R$ and $\overline P\backslash \overline Q\times \overline Q\backslash \overline R\rightarrow \overline P\backslash \overline R$ provide choices for the last two right cosets. With these choices we have 
$$
B^{n,m}(\iota_Q^R)\circ B^{n,m}(\iota_P^Q)=B^{n,m}(\iota_P^R).
$$

For the second condition, the maps $B^{n,m}(\iota_P^Q)$ and $B^{n,m}(\iota_{\varphi(P)}^{\varphi(Q)})$ depend on choices of representatives for the right cosets $P\backslash Q$ and $\overline P\backslash \overline Q$, and $\varphi(P)\backslash \varphi(Q)$ and $\overline {\varphi(P)}\backslash \overline {\varphi(Q)}$ respectively. Fix representatives in $P\backslash Q$ and $\overline P\backslash \overline Q$ and force the other choices via the bijections $P\backslash Q \stackrel{\varphi}\rightarrow \varphi(P)\backslash \varphi(Q)$ and $\overline P\backslash \overline Q\stackrel{\overline\varphi}\rightarrow\overline {\varphi(P)}\backslash \overline {\varphi(Q)}$. Then we have
$$
B^{n,m}(\iota_P^Q)\circ A^{n,m}(\varphi_{|P})=A^{n,m}(A)(\varphi)\circ B^{n,m}(\iota_{\varphi(P)}^{\varphi(Q)}).
$$
\end{proof}

\begin{prop}\label{hpqisMackey}
For each $p,q\geq 0$ the functor $\F\rightarrow \plocz\dash mod$ with values 
$$
H^p(\overline P;H^q(P\cap T;M)
$$ and taking $\varphi\in \Hom_\F(P,Q)$ to $H^{p,q}(A)(\varphi)$ is a cohomological Mackey functor with covariant part taking $\varphi\in \Hom_\F(P,Q)$ to $H^{p,q}(B)(\varphi)$.
\end{prop}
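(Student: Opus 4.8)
\section*{Proof proposal for Proposition \ref{hpqisMackey}}

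The plan is to check the four conditions of Definition \ref{defMackey} for the pair $(H^{p,q}(A),H^{p,q}(B))$, given that $H^{p,q}(A)$ is a functor (Remark \ref{remarkhpabisfunctor}), that $H^{p,q}(B)$ is a functor (Corollary \ref{corhpqbisfunctor}), and that cohomology of finite groups is a cohomological Mackey functor. The recurring device is \emph{separation of variables}: by Lemmas \ref{lemmahpqadecomposition} and \ref{lemmahpqbdecomposition}, each of $H^{p,q}(A)(\iota)$, $H^{p,q}(B)(\iota)$ and $H^{p,q}(A)(c_g)$ factors as a ``base'' operation on $H^p(\overline{-};-)$ composed with a ``fibre'' operation, i.e.\ a change of coefficients along restriction, transfer or conjugation among the groups $-\cap T$; and the Mackey relations we must verify will then follow from the corresponding relations for the group cohomology of the quotients $\overline P$ and of the subgroups $P\cap T$.

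Condition (1) is immediate: for $\varphi\in\Hom_\F(P,\varphi(P))$ the morphism $\varphi^{-1}$ is already an isomorphism onto $P$, so by (\ref{eqnBongenericmorphism}) $B^{*,*}(\varphi^{-1})=B^{*,*}(\id_P)\circ A^{*,*}(\varphi)$, and specialising (\ref{defdiagonaltwistedtransfer}) to $Q=P$ (a single summand, trivial coset representatives) gives $B^{*,*}(\id_P)=\id$. For condition (2), apply Lemma \ref{lemmahpqadecomposition} to $\varphi=c_p$: the base part is induced by the inner automorphism $c_{\overline p}$ of $\overline P$, and the fibre part is the change of coefficients $H^q(c_p|_{P\cap T})$, which is precisely the action of $\overline p$ on the $\plocz\overline P$-module $H^q(P\cap T;M)$; the composite is therefore the endomorphism of $H^p(\overline P;H^q(P\cap T;M))$ induced by the pair (conjugation by $\overline p$, the $\overline p$-action on coefficients), which is the identity because inner automorphisms act trivially on group cohomology with coefficients in a module. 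The case of $H^{p,q}(B)(c_p)$ follows from this together with (1). For condition (4), with $P\le Q$, compose the factorizations of Lemmas \ref{lemmahpqadecomposition} and \ref{lemmahpqbdecomposition} for $\iota^Q_P$: the two middle changes of coefficients compose to multiplication by $|Q\cap T:P\cap T|$ on $H^q(P\cap T;M)$ (cohomological axiom for $Q\cap T$), a scalar which commutes with the remaining $\plocz$-linear base maps, while those leave $\mathrm{tr}^{\overline Q}_{\overline P}\circ\mathrm{res}^{\overline Q}_{\overline P}=|\overline Q:\overline P|$ on $H^p(\overline Q;-)$; the product is $|Q:P|$ because $|Q:P|=|\overline Q:\overline P|\cdot|Q\cap T:P\cap T|$.

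Condition (3), the double coset formula, is the genuinely delicate point; since it involves only inclusions between subgroups of $S$ and conjugations by elements of $S$, it is in fact a statement about the functor restricted to $\F_S(S)$. Fix $Q,R\le P\le S$. Decomposing $H^{p,q}(A)(\iota^P_Q)\circ H^{p,q}(B)(\iota^P_R)$ by the two lemmas, its base part is $\mathrm{res}^{\overline P}_{\overline Q}\circ\mathrm{tr}^{\overline P}_{\overline R}$ on $H^p(\overline{-};H^q(P\cap T;M))$, to which I would apply the double coset formula for the group cohomology of $\overline P$ relative to $\overline Q,\overline R$, producing a sum indexed by $\overline Q\backslash\overline P/\overline R$; decomposing, on the other side, each summand of (3) attached to $x\in Q\backslash P/R$, its base part is $\mathrm{tr}^{\overline Q}_{\overline{Q\cap{}^xR}}\circ\mathrm{res}^{\overline{{}^xR}}_{\overline{Q\cap{}^xR}}$ precomposed with conjugation by $\overline x$, and its fibre part is assembled from conjugation by $x$, restriction to $(Q\cap{}^xR)\cap T$ and transfer to $Q\cap T$. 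As $T$ is strongly $\F$-closed, $P\cap T\trianglelefteq P$, so there is a well-defined surjection $Q\backslash P/R\twoheadrightarrow\overline Q\backslash\overline P/\overline R$; the plan is to sum the right-hand summands of (3) over the fibres of this map. The elements of a fibre have the form $x_0t$ with $t$ ranging over certain double cosets inside $P\cap T$, so $\overline{x_0t}=\overline{x_0}$ and only the fibre data and the subgroup $\overline{Q\cap{}^{x_0t}R}\le\overline Q\cap{}^{\overline{x_0}}\overline R$ vary; the point is then to show that for each fixed image double coset the sum of these partial base-and-fibre contributions collapses --- via the double coset formula for the group cohomology of $P\cap T$ applied to the conjugated subgroups $Q\cap T$ and ${}^{x_0}(R\cap T)$, together with the standard compatibilities among conjugation isomorphisms, restriction and transfer --- to the corresponding single term of the $\overline P$-formula, all the while tracking the $\plocz\overline P$-module structures borne by the coefficient groups. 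I expect this reconciliation of the two different index sets $Q\backslash P/R$ and $\overline Q\backslash\overline P/\overline R$ to be the main obstacle. A possibly cleaner alternative is to establish (3) already at the level of double complexes directly from formula (\ref{defdiagonaltwistedtransfer}): choose coset representatives for the $\overline{-}$-level and $T$-level cosets compatibly with the double coset decomposition of $P$ as a $(Q,R)$-biset, exactly as compatible representatives were used in the proof of Corollary \ref{corhpqbisfunctor}, and check that the two $B^{*,*}$-type composites agree term by term; passing to cohomology then yields (3).
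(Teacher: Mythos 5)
Your treatment of conditions (1), (2) and (4) is correct and coincides with the paper's: (1) from Equation (\ref{eqnBongenericmorphism}), (2) from Lemma \ref{lemmahpqadecomposition} applied to $c_p$ together with triviality of inner automorphisms on cohomology, and (4) by composing the factorizations of Lemmas \ref{lemmahpqadecomposition} and \ref{lemmahpqbdecomposition} so that the two cohomological axioms for $P\cap T\leq Q\cap T$ and $\overline P\leq\overline Q$ multiply to $|Q:P|$.

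For condition (3), however, your primary plan is not yet a proof, and you have correctly identified where it breaks: the ``separation of variables'' is not clean for the composite $H^{p,q}(A)(\iota^P_Q)\circ H^{p,q}(B)(\iota^P_R)$, because the ``base part'' of $H^{p,q}(B)(\iota^P_R)$ is a transfer $\overline R\to\overline P$ whose coefficient module is simultaneously being changed from $H^q(R\cap T;M)$ to $H^q(P\cap T;M)$, so the classical double coset formula for $\mathrm{res}^{\overline P}_{\overline Q}\circ\mathrm{tr}^{\overline P}_{\overline R}$ with a fixed coefficient module does not apply verbatim; and on the other side the index sets $Q\backslash P/R$ and $\overline Q\backslash\overline P/\overline R$ must be reconciled fibre by fibre. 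You flag this reconciliation as ``the main obstacle'' and do not carry it out, so as written this route has a genuine gap. Your one-sentence ``cleaner alternative'' is precisely what the paper does: it verifies (3) at the level of cochains directly from formula (\ref{defdiagonaltwistedtransfer}), by expanding $B^{n,m}(\iota^P_R)(f)$ as a sum over $P/R$, grouping the terms into $Q$-orbits so that the sum re-indexes as $\sum_{p\in Q\backslash P/R}\sum_{q\in Q/Q\cap{}^pR}$, and then choosing the representatives of $R\backslash P$ via the bijection $\bigsqcup_{p}\,(Q\cap{}^pR)\backslash Q\to R\backslash P$, $(Q\cap{}^pR)q\mapsto Rp^{-1}q$, so that the maps $\tau^{P,R}_*$ and $\tau^{Q,Q\cap{}^pR}_*$ (and their barred analogues) are compatible and the two sides agree term by term. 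So you have named the right strategy, but the actual content of the paper's argument for (3) --- the re-indexing of the transfer sum and the compatible choice of coset representatives --- still needs to be supplied.
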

\begin{proof}
Property (1) from Definition \ref{defMackey} holds by Equation (\ref{eqnBongenericmorphism}). Property (2) follows from property (1), the well known fact that conjugation induces the identity on cohomology, from Lemma \ref{lemmahpqadecomposition} and from $\overline{c_p}=c_{\overline p}$ for $p\in P\leq S$. Now we check property ($3$), also known as Mackey condition or double coset formula. So let $Q,R\leq P\leq S$. We will prove this condition at the level of cochains, i.e.:
$$
A^{n,m}(\iota^P_Q)\circ B^{n,m}(\iota^P_R)=\sum_{x\in Q\backslash P/R} B^{n,m}(\iota^Q_{Q\cap {}^xR})\circ A^{n,m}(\iota^{{}^xR}_{Q\cap {}^xR})\circ A^{n,m}({c_{x^{-1}}}_{|{}^xR}).
$$
So let $f\in A^{n,m}(R)=\Hom_R(\Bar_{\overline R}^n\otimes \Bar_R^m,M)$, $y\in \Bar^n_{\overline{Q}}$ and $x\in \Bar^m_{Q}$. Then:
$$
A^{n,m}(\iota^P_Q)(B^{n,m}(\iota^P_R)(f))(y\otimes x)=B^{n,m}(\iota^P_R)(f)(\overline{\iota^P_Q}(y)\otimes \iota^P_Q(x))=B^{n,m}(\iota^P_R)(f)(y\otimes x).
$$
This equals
$$
\sum_{w\in P/R} f(\overline{\tau}^{P,R}_n({\overline w}^{-1}y)\otimes \tau^{P,R}_m(w^{-1}x)),
$$
where $w$ runs over a set of representatives of the left cosets $P/R$, $\overline{\tau}^{P,R}_n:\Bar^n_{\overline P}\rightarrow \Bar^n_{\overline R}$ and $\tau^{P,R}_m:\Bar^m_P\rightarrow \Bar^m_R$. Now we let $Q$ acts on the left on $P/R$ and we group together the terms corresponding to a given $Q$-orbit in $P/R$:
$$
\sum_{p\in Q\backslash P/R} \sum_{q\in Q/Q\cap {}^p R}  f(\overline{\tau}^{P,R}_n({\overline {qp}}^{-1}y)\otimes \tau^{P,R}_m((qp)^{-1}x)),
$$
where now $p$ runs over a set of representatives for the double cosets $Q\backslash P/R$ and $q$ runs over a set of representatives of the left cosets $Q/Q\cap {}^n R$. This equals
$$
\sum_{p\in Q\backslash P/R} \sum_{q\in Q/Q\cap {}^p R}  f(\overline{\tau}^{P,R}_n({\overline p}^{-1}{\overline q}^{-1}y)\otimes \tau^{P,R}_m(p^{-1}q^{-1}x)).
$$
The right-hand side of the Mackey formula is
$$
\sum_{p\in Q\backslash P/R} \sum_{q\in Q/Q\cap {}^p R}  f({\overline p}^{-1}\overline{\tau}^{Q,Q\cap {}^p R}_n({\overline q}^{-1}y){\overline p}\otimes p^{-1}\tau^{Q,Q\cap {}^p R}_q(q^{-1}x)p)
$$
with $\overline{\tau}^{Q,Q\cap {}^p R}_n:\Bar^n_{\overline Q}\rightarrow \Bar^n_{\overline{ Q\cap {}^p R}}$, $\tau^{Q,Q\cap {}^p R}_m:\Bar^m_Q\rightarrow \Bar^m_{Q\cap {}^p R}$ and where we have realized ${c_{p^{-1}}}_{|{}^pR}$ at the level of cochains as
$$
A^{n,m}({c_{p^{-1}}}_{|{}^pR})(y\otimes x)=({\overline p}^{-1}y{\overline p}\otimes p^{-1}xp).
$$

The map $\tau^{P,R}_m$ depends on a choice of representatives for the right cosets $R\backslash P$. Analogously, for any representative $p\in Q\backslash P/R$, the map $\tau^{Q,Q\cap {}^p R}_m$ is built out of a set of representatives of $Q\cap {}^p R\backslash Q$. We want to choose representatives of $R\backslash P$ and of $Q\cap{}^p R\backslash Q$ for each double coset $QpR$ such that
$$
\xymatrix{
Q\ar[r]^<<<<<<{\rho_p}\ar[d]_{q\mapsto p^{-1}q}& Q\cap{}^pR\ar[d]^{q\mapsto p^{-1}q p}\\
P\ar[r]^{\rho}& R
}
$$
commutes for each double coset $QpR$. For this is enough to choose arbitrary representatives $q$ of $Q\cap{}^p R\backslash Q$ for each double coset $QpR$ and build the representatives in $R\backslash P$ via the bijection
$$
\bigsqcup_{p\in Q\backslash P/R} Q\cap {}^p R\backslash Q\rightarrow R\backslash P
$$
that takes $(Q\cap {}^p R)q$ to $Rp^{-1}q$. The same argument for  $\overline\tau^{P,R}_n$  and the maps $\overline{\tau}^{Q,Q\cap {}^p R}_n$ finishes the proof of property ($3$).

To prove property ($4$) we go back to the level of cohomology. Let $P\leq Q\leq S$. By Lemmas \ref{lemmahpqadecomposition} and \ref{lemmahpqbdecomposition} the composition $H^{n,m}(B)(\iota_P^Q)\circ H^{n,m}(A)(\iota_P^Q)$ is equal to
$$
H^n(tr)\circ H^m(tr')\circ H^m(\iota_P^Q)\circ H^n(\overline{\iota_P^Q}).
$$ 
Because cohomology over finite groups is a cohomological Mackey functor we know that $H^m(tr')\circ H^m(\iota_P^Q)$ is multiplication by $|Q\cap T|/|P\cap T|$. Moving out this factor we are left with $H^n(tr)\circ H^n(\overline{\iota_P^Q})$. As $\overline{\iota_P^Q}=\iota_{\overline P}^{\overline Q}$ we obtain again by properties of cohomology for finite groups that this composition is multiplication by $|\overline Q|/|\overline P|$. So finally we obtain that $H^{n,m}(B)(\iota_P^Q)\circ H^{p,q}(A)(\iota_P^Q)$ is multiplication by
$$
\frac{|Q\cap T|}{|P\cap T|}\frac{|\overline Q|}{|\overline P|}=\frac{|Q|}{|P|}.
$$
\end{proof}

%\begin{remark}
%In view of the proof of Lemma \ref{lemmahpqbdecomposition} we could have used 
%$$
%\sum_{{\overline q_i}\in \overline Q/\overline P}\sum_{t_j\in (Q\cap T)/(P\cap T)} f(\overline \tau^{Q,P}_n{(\overline {q_i}}^{-1}y)\otimes \tau^{Q,P}_m(t_j^{-1}{q_i}^{-1}x)).
%$$
%as an alternative definition in Equation \ref{defdiagonaltwistedtransfer}.
%\end{remark}

\section{Construction of the spectral sequence.}\label{section_construction}

In this section we prove the main theorem of this paper:

\begin{thm}\label{maintheorem}
Let $\F$ be a fusion system over the $p$-group $S$, $T$ a strongly $\F$-closed subgroup of $S$ and $M$ a $\plocz$-module with trivial $S$-action. Then there is a first quadrant cohomological spectral sequence with second page
$$
E_2^{n,m}=H^n(S/T;H^m(T;M))^\F
$$
and converging to $H^{n+m}(\F;M)$.
\end{thm}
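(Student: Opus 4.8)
The plan is to assemble the spectral sequence from the double complex $A^{*,*}$ constructed in Section \ref{section_amackey}, after first passing to $\F$-stable elements. For each $P\le S$ the double complex $A^{*,*}(P)=\Hom_P(\Bar^*_{\overline P}\otimes\Bar^*_P,M)$ carries the standard filtration (by the column degree $n$) and the associated spectral sequence of the single complex $\Tot A^{*,*}(P)$ converges to $H^{*}(P;M)$; its $E_2$ page is the Lyndon-Hochschild-Serre $E_2$ page $H^n(\overline P;H^m(P\cap T;M))$ of the extension $P\cap T\rightarrow P\rightarrow\overline P$, by \cite[Equation (10.2), page 352]{homology}. By Remark \ref{remarkhpabisfunctor}, $A^{*,*}$ is a contravariant functor $\F\rightarrow\CCh^2(\plocz)$, so we may form the $\F$-stable subcomplex $A^{*,*}(S)^\F\subseteq A^{*,*}(S)$; its column filtration yields a first-quadrant cohomological spectral sequence $\{E_r^{n,m}\}$, which is the object we will show has the required $E_2$ page and abutment.

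\textbf{Identifying the abutment.} First I would show $\Tot A^{*,*}(S)^\F$ computes $H^*(\F;M)$. The key point is that $(n\mapsto m\mapsto)\Tot A^{*,*}$, equipped with the covariant maps $B^{*,*}$ from \eqref{defdiagonaltwistedtransfer}, is (after totalization) a cohomological Mackey functor $\F\rightarrow\CCh(\plocz)$: conditions (1), (2), and the double coset formula (3) were verified at the cochain level in the proof of Proposition \ref{hpqisMackey} for the bigraded pieces, and (4) holds because it holds bigraded-wise with multiplier $|Q\cap T|\,|\overline Q|/(|P\cap T|\,|\overline P|)=|Q:P|$; these identities pass to $\Tot$ since $B^{n,m}$ commutes with both differentials. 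Then Proposition \ref{lemmacochainsforF} gives $H^*(\Tot A^{*,*}(S)^\F)\cong H^*(\Tot A^{*,*}(S))^\F=H^*(S;M)^\F=H^*(\F;M)$. So the spectral sequence of the filtered complex $\Tot A^{*,*}(S)^\F$ converges to $H^{n+m}(\F;M)$.

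\textbf{Identifying the $E_2$ page.} The $E_1$ page of the column-filtered spectral sequence of $A^{*,*}(S)^\F$ is $E_1^{n,m}=H^m_v\bigl(A^{n,*}(S)^\F\bigr)$ (vertical cohomology of the $\F$-stable elements), with $d_1$ the induced horizontal differential. I would argue that taking $\F$-stable elements commutes with passing to vertical then horizontal cohomology: for fixed $n$, the functor $P\mapsto A^{n,*}(P)\in\CCh(\plocz)$ together with $B^{n,*}$ is a cohomological Mackey functor (same cochain-level verifications as above, restricted to a single column, which is legitimate since $B^{n,m}$ and $A^{n,m}(\varphi)$ preserve the column degree $n$), so Proposition \ref{lemmacochainsforF} gives $E_1^{n,m}=H^m(A^{n,*}(S))^\F=H^0(\overline{(\cdot)};H^m((\cdot)\cap T;M))$-valued $\F$-stable elements placed in column $n$. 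A second application — now with $n$ varying, using that $P\mapsto H^m(A^{n,*}(P))$ with its transfer maps is a cohomological Mackey functor for each fixed $m$ by Corollary \ref{corhpqbisfunctor} and Proposition \ref{hpqisMackey} — identifies $E_2^{n,m}=H^n_h(E_1^{*,m})$ with $H^n(S/T;H^m(T;M))^\F$, since $S\cap T=T$ and $\overline S=S/T$.

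\textbf{Main obstacle.} The delicate step is justifying that Proposition \ref{lemmacochainsforF} (equivalently Lemma \ref{lemmainvariantsexactonMackey}, exactness of $(-)^\F$) can be applied \emph{iteratively} — first to the column complexes $A^{n,*}$, then to their cohomology functors $H^m(A^{n,*})$ — and that the resulting identifications are compatible with the horizontal differential $d_1$, so that the spectral sequence of $A^{*,*}(S)^\F$ really has $\F$-stable $E_2$ term rather than merely something abstractly isomorphic to it. This requires checking that the isomorphism $H^*(A^{n,*}(S)^\F)\cong H^*(A^{n,*}(S))^\F$ of Proposition \ref{lemmacochainsforF} is natural in $n$, i.e.\ compatible with $d^h$; this follows because the isomorphism is induced by the inclusion $A^{n,*}(S)^\F\hookrightarrow A^{n,*}(S)$ landing in the $\F$-stable part after taking cohomology, and $d^h$ is a morphism of functors $\F\to\CCh(\plocz)$, hence restricts to $\F$-stable elements. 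Once this naturality is in hand, the two spectral sequences — that of $\Tot A^{*,*}(S)^\F$ and the ``$\F$-stable LHS'' described in the introduction — are identified from $E_2$ onward, and convergence is automatic since the filtration is first-quadrant, hence bounded. Comparing with the explicit $\F$-stable description in Section \ref{sectioncohomology} (elements $z$ with $\varphi^*(z)=\res(z)$) completes the proof.
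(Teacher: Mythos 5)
Your strategy --- take $\F$-stable elements at the cochain level and run the column filtration of the subcomplex $A^{*,*}(S)^\F$ --- is not the paper's, and it has a genuine gap at its foundation. Every application of Proposition \ref{lemmacochainsforF} in your argument (to $\Tot A^{*,*}$ for the abutment, and to the column complexes $A^{n,*}$ for the $E_1$ and $E_2$ pages) requires the pair $(A,B)$ to be a cohomological Mackey functor valued in cochain complexes, and in particular requires $B$ to be a \emph{covariant functor}. But $B^{*,*}$ is not functorial at the cochain level: the transfer maps $B^{n,m}(\iota)$ of Equation (\ref{defdiagonaltwistedtransfer}) depend on choices of coset representatives for $P\backslash Q$ and $\overline P\backslash\overline Q$, and while transitivity, conjugation-compatibility and the double coset formula can each be arranged \emph{for a given composition} by making compatible choices (this is exactly what the proofs of Corollary \ref{corhpqbisfunctor} and Proposition \ref{hpqisMackey} do), these choices cannot be made globally consistent; $B$ only becomes functorial after passing to cohomology (Lemma \ref{lemmahpqbdecomposition}, Corollary \ref{corhpqbisfunctor}). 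Section \ref{section_amackey} flags precisely this point (``If one could choose $A^{n,m}$ and $B^{n,m}$ such that $(A^{n,m},B^{n,m})$ was a Mackey functor then the proof of Theorem \ref{maintheorem} would be simpler''), as does Remark \ref{remarkMackeycochainsdonotknow} for ordinary cochains. So your claim that the cochain-level identities ``pass to $\Tot$'' does not deliver the hypothesis of Proposition \ref{lemmacochainsforF}, and the isomorphisms $H^*(\Tot A^{*,*}(S)^\F)\cong H^*(S;M)^\F$ and $E_1^{n,m}\cong H^m(A^{n,*}(S))^\F$ are unjustified as written.

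The paper avoids this by never taking stable elements below the $E_2$ page: it sets $E_k:=({}^cE_k^{*,*}(S))^\F$ for $k\geq 2$, where $({}^cE_k^{*,*}(A),{}^cE_k^{*,*}(B))$ is an honest cohomological Mackey functor (Proposition \ref{hpqisMackey} for $k=2$, preserved under taking cohomology of each page), so that Proposition \ref{lemmacochainsforF} applies page by page to give $({}^cE_{k+1}^{*,*})^\F=H^*(({}^cE_k^{*,*})^\F,d_k)$, with a finiteness argument extending this to $k=\infty$. Convergence is then obtained not from a filtered complex of stable cochains but by applying the exactness of $(-)^\F$ (Lemma \ref{lemmainvariantsexactonMackey}) to the short exact sequences $0\rightarrow F^nH^{n+m}\rightarrow F^{n+1}H^{n+m}\rightarrow{}^cE_\infty^{n,m}\rightarrow 0$ of Mackey functors. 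To salvage your route you would need either genuinely functorial transfers on cochains (realizing $\F$ by a single finite group produces functorial transfers but destroys the functoriality of $A$, as the paper notes) or a replacement for Proposition \ref{lemmacochainsforF} below $E_2$; as it stands, the proposal does not close.
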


\begin{proof}
For each subgroup $P\leq S$ we have the short exact sequence 
$$
P\cap T\rightarrow P\rightarrow \overline P=P/P\cap T.
$$
The construction of the Lyndon-Hochschild-Serre spectral sequence in  \cite[XI.10.1]{homology} associates to this short exact sequence a double complex naturally isomorphic 
%(Remark \ref{remark_alternativeapq}) 
to the double complex 
$$
A^{n,m}(P)=\Hom_P(\Bar_{\overline P}^n\otimes \Bar_P^m,M)
$$
defined in Section \ref{section_amackey}. This double complex we can filter either by columns or rows. If we filter by columns we obtain a spectral sequence $\{{}^c E_k^{*,*}(P),d_k\}_{0\leq k\leq \infty}$ whose second page is ${}^c E_2^{n,m}(P)=H^n(\overline P;H^m(P\cap T;M))$. If we filter by rows we obtain a spectral sequence $\{{}^r E_k^{*,*}(P),d_k\}_{0\leq k\leq \infty}$ whose second page collapses as ${}^r E_2^{n,m}(P)=H^m(P;M)$ for $n=0$ and ${}^r E_2^{n,m}(P)=0$ for $n>0$.

For each morphism $\varphi\in \Hom_\F(P,Q)$ we have morphisms of double complexes $A^{n,m}(\varphi)\colon A^{n,m}(Q)\rightarrow A^{n,m}(P)$ and $B^{n,m}(\varphi)\colon A^{n,m}(P)\rightarrow A^{n,m}(Q)$ defined in Section \ref{section_amackey}. These morphisms of double complexes induce morphism of spectral sequences consisting of a sequence of morphism of differential bigraded $\plocz$-modules
$$
{}^c E_k^{*,*}(A)(\varphi)\colon {}^c E_k^{*,*}(Q)\rightarrow{}^c E_k^{*,*}(P)
$$
$$
{}^c E_k^{*,*}(B)(\varphi)\colon {}^c E_k^{*,*}(P)\rightarrow{}^c E_k^{*,*}(Q)
$$
and
$$
{}^r E_k^{*,*}(A)(\varphi)\colon {}^r E_k^{*,*}(Q)\rightarrow{}^r E_k^{*,*}(P)
$$
$$
{}^r E_k^{*,*}(B)(\varphi)\colon {}^r E_k^{*,*}(P)\rightarrow{}^r E_k^{*,*}(Q)
$$
for $0\leq k\leq \infty$. We deal now with the filtration by columns spectral sequences. The second page ${}^c E_2^{*,*}$ is obtained by computing vertical cohomology followed by horizontal cohomology in in the double complex $A^{*,*}$. Hence we have
$$
{}^c E_2^{n,m}(P)=H^n(\overline P;H^m(P\cap T;M)),
$$
$$
{}^c E_2^{n,m}(A)(\varphi)=H^{n,m}(A)(\varphi)\textit{ and }{}^c E_2^{n,m}(B)(\varphi)=H^{n,m}(B)(\varphi),
$$
for $P\leq S$ and $\varphi\in \Hom_\F(P,Q)$, where $H^{n,m}(A)$ and $H^{n,m}(B)$ are \emph{functors} $\F\rightarrow \plocz\dash mod$ by Remark \ref{remarkhpabisfunctor} and Corollary \ref{corhpqbisfunctor} respectively. Hence, for each $2\leq k\leq \infty$, we have a contravariant functor
$$
{}^c E_k^{*,*}(A):\F\rightarrow \textit{Differential bigraded $\plocz$-modules}
$$
and a covariant functor
$$
{}^c E_k^{*,*}(B):\F\rightarrow \textit{Differential bigraded $\plocz$-modules.}
$$
On the one hand, we can take invariants for each $2\leq k\leq \infty$ to obtain a differential bigraded $\plocz$-module
$$
{}^c {E_k^{*,*}}^\F=\{z\in {}^c E_k^{*,*}(S)|{}^c E_k^{*,*}(A)(\varphi)(z)={}^cE_k^{*,*}(A)(\iota_P^S)(z)\textit{ for $P\stackrel{\varphi}\rightarrow S$}\}.
$$
On the other hand, for $k=2$, we have by Proposition \ref{hpqisMackey} that $({}^c E_2^{*,*}(A),{}^c E_2^{*,*}(B))$ is a cohomological Mackey functor. Because ${}^c E_{k+1}^{*,*}=H^*({}^c E_k^{*,*},d_k)$ and because passing to cohomology preserves cohomological Mackey functors we deduce that ${}^c E_k^{*,*}(A)$ is a cohomological Mackey functor with covariant part ${}^c E_k^{*,*}(B)$ for $2\leq k<\infty$. By Proposition \ref{lemmacochainsforF} we obtain then that 
\begin{equation}\label{eqnbunchisss}
{{}^c E_{k+1}^{*,*}}^\F=H^*({}^c E_k^{*,*},d_k)^\F=H^*({{}^c E_k^{*,*}}^\F,d_k)
\end{equation}
for $2\leq k<\infty$. Fix now $n\geq 0$ and $m\geq 0$. For each  subgroup $P\leq S$ we have 
$$
{}^c E_k^{n,m}(P)={}^c E_{k+1}^{n,m}(P)=\ldots={}^c E_{\infty}^{n,m}(P)
$$
for $k$ big enough. Because there are a finite number of subgroups of $S$ we deduce that 
$$
{{}^c E_k^{n,m}}^\F={{}^c E_{k+1}^{n,m}}^\F=\ldots={}^c {E_{\infty}^{n,m}}^\F
$$
for $k$ big enough. Hence Equation (\ref{eqnbunchisss}) also holds for $k=\infty$ and we have obtained a spectral sequence 
$$
\{{{}^c E_k^{*,*}}^\F,d_k\}_{2\leq k\leq \infty}.
$$
To study whether this spectral sequence converges\textsl{•} recall that for $P\leq S$ the spectral sequence $\{{}^c E_k^{*,*}(P),d_k\}_{0\leq k\leq \infty}$ converges to $H^*(P;M)$. Hence we have short exact sequences 
$$
\xymatrix{
0\ar[r]& F^nH^{n+m}(P;M)\ar[r]&F^{n+1}H^{n+m}(P;M)\ar[r]&{}^c E^{n,m}_\infty(P)\ar[r]& 0
}
$$
where
$$
0\subseteq \ldots \subseteq F^nH^{n+m}(P;M)\subseteq F^{n+1}H^{n+m}(P;M)\subseteq \ldots\subseteq H^{n+m}(P;M)
$$
is the filtration induced on $H^*(P;M)$ by the filtration by columns on the double complex $A^{*,*}(P)$. This short exact sequence is natural with respect to morphisms of double complexes. Hence for each $\varphi\in \Hom_\F(P,Q)$ we have morphisms of short exact sequences
$$
\xymatrix{
0\ar[r]&F^nH^{n+m}(Q;M)\ar[d]^{F^nH^{n+m}(A)(\varphi)}\ar[r]&F^{n+1}H^{n+m}(Q;M)\ar[r]\ar[d]^{F^{n+1}H^{n+m}(A)(\varphi)}&{}^c E^{n,m}_\infty(Q)\ar[d]^{{}^c E_\infty^{n,m}(A)(\varphi)}\ar[r]& 0\\
0\ar[r]& F^nH^{n+m}(P;M)\ar[r]&F^{n+1}H^{n+m}(P;M)\ar[r]&{}^c E^{n,m}_\infty(P)\ar[r]& 0
}
$$
%where $H^{n+m}(\varphi)$ is the map induced by $\varphi$ in %cohomology with $M$-coefficients, and
and
$$
\xymatrix{
0\ar[r]&F^nH^{n+m}(P;M)\ar[d]^{F^nH^{n+m}(B)(\varphi)}\ar[r]&F^{n+1}H^{n+m}(P;M)\ar[r]\ar[d]^{F^{n+1}H^{n+m}(B)(\varphi)}&{}^c E^{n,m}_\infty(P)\ar[d]^{{}^c E_\infty^{n,m}(B)(\varphi)}\ar[r]& 0\\
0\ar[r]& F^nH^{n+m}(Q;M)\ar[r]&F^{n+1}H^{n+m}(Q;M)\ar[r]&{}^c E^{n,m}_\infty(Q)\ar[r]& 0.
}
$$
%where $H^{n+m}(tr)$ is the transfer map in cohomology with $M$-%coefficients. 
We want to show that the morphisms $H^n(A)(\varphi)\colon H^n(Q;M)\rightarrow H^n(P;M)$ and $H^n(B)(\varphi)\colon H^n(P;M)\rightarrow H^n(Q;M)$ induced by $A$ and $B$ on the targets of the spectral sequences are the usual maps in cohomology of groups. We consider the total complex of the double complex $A^{*,*}(P)$ defined as usual by $\Tot^s(A)=\bigoplus_{n+m=s} A^{n,m}(P)$ and with total differential $d^h+d^v$. There is a chain map given by $\zeta\colon \Hom_P(\Bar_P^*,M)\rightarrow \Tot^*(A)$ sending $f\in \Hom_P(\Bar_P^m,M)$ to $\zeta(f)\in A^{0,m}$ defined by
$$
\zeta(f)(\overline{p}\otimes x)=f(x)\textit{, $\overline{p}\in \overline{P}$, $x\in \Bar_P^m$.}
$$
The map $\zeta$ induces an isomorphism between the cohomology of the total complex and $H^*(P;M)$ (cf. \cite[p. 352]{homology}). Now, from the definitions of the maps $A^{0,m}(\varphi)\colon A^{0,m}(Q)\rightarrow A^{0,m}(P)$ and $B^{0,m}(\iota_P^Q)\colon A^{0,m}(P)\rightarrow A^{0,m}(Q)$ it is easy to check that $H^n(A)(\varphi)$ and $H^n(B)(\iota_P^Q)$ are the usual maps in cohomology of groups (see \cite[(D), page 82]{brown}).

By properties of cohomology for finite groups $(H^n(A),H^n(B))\colon\F\rightarrow \plocz\dash mod$ is a  cohomological Mackey functor for each $n\geq 0$. Hence so are the functors $(F^nH^{n+m}(A),F^nH^{n+m}(B))\colon\F\rightarrow \plocz\dash mod$ induced in the filtration for $n,m\geq 0$. By the arguments above also the pair $({}^c E_\infty^{n,m}(A),{}^c E_\infty^{n,m}(B))\colon\F\rightarrow \plocz\dash mod$ is a cohomological Mackey functor for $n,m\geq 0$. Then by Lemma \ref{lemmainvariantsexactonMackey} we have a short exact sequence of $\plocz$-modules
$$
\xymatrix{
0\ar[r]& (F^nH^{n+m})^\F\ar[r]&(F^{n+1}H^{n+m})^\F\ar[r]&{{}^c E^{n,m}_\infty}^\F\ar[r]& 0.
}
$$
It is immediate that taking invariants and filtering commute and hence we have 
$$
\xymatrix{
0\ar[r]& F^n({H^{n+m}}^\F)\ar[r]&F^{n+1}({H^{n+m}}^\F)\ar[r]&{{}^c E^{n,m}_\infty}^\F\ar[r]& 0,
}
$$
for the filtration of ${H^{n+m}}^\F={H^{n+m}(S)}^\F$ given by
$F^n({H^{n+m}}^\F)=F^n({H^{n+m}(S)})\cap H^{n+m}(S)^\F$.
This finishes the proof.
\end{proof}
\begin{remark}
We have seen in the proof that for each $2\leq k\leq \infty$ the pair $$({}^c E_k^{*,*}(A),{}^c E_k^{*,*}(B))\colon \F\rightarrow \textit{Differential bigraded $\plocz$-modules}$$ is a cohomological Mackey functor. Moreover, $\{({}^c E_k^{*,*}(A),{}^c E_k^{*,*}(B))\}_{2\leq k\leq \infty}$ is a spectral sequence of Mackey functors that converges as a Mackey functor to the usual cohomology of finite groups Mackey functor $(H^*(A),H^*(B))\colon\F\rightarrow \plocz\dash mod$.
\end{remark}
%As noted in the introduction each morphism $\varphi:P\rightarrow Q$ induces a morphism from the $E_2$-page of the Lyndon-Hochschild-Serre spectral sequence of the short exact sequence
%$$
%Q\cap T\rightarrow Q\rightarrow \overline Q
%$$
%to that of 
%$$
%P\cap T\rightarrow P\rightarrow \overline P.
%$$
%This a morphism of differential graded algebras and hence the $E_2$-page of the spectral sequence in the statement of the theorem is a differential bigraded subalgebra of $H^p(S/T;H^q(T;M))$. In particular, the differential in the $E_2$-page of the spectral sequence of the theorem is the restriction to this subalgebra of the differential in the $E_2$-page of the Lyndon-Hochschild-Serre spectral sequence for $T\rightarrow S\rightarrow S/T$. 
%
%
%
%

\section{Comparison.}\label{section_examples}
In this section we compare our spectral sequence and Lyndon-Hochschild-Serre spectral sequence. Let $G$ be a finite group, $K\unlhd G$ and $S\in \Syl_p(G)$. Then $T=K\cap S$ is a Sylow $p$-subgroup of $K$. Moreover, $T$ is strongly $\F_S(G)$-closed.
% and $\H=\F_T(K)$ is normal in $\F$ \cite[6.3]{normalAschbacher}. By \cite[8.8]{normalAschbacher} we  may identify $\F_{SK/K}(G/K)$ with the fusion system  $\F/\H=\F/T$ defined in Remark \ref{remark_Cravenquotient}. Throughout this section we will do so.
Fix a $\plocz$-module $M$ with trivial $G$-action. The Lyndon-Hochschild-Serre spectral sequence $E_{*,G}$ of the extension $K\rightarrow G\rightarrow G/K$ is
$$
H^n(G/K;H^m(K;M))\Rightarrow H^{n+m}(G;M)
$$
meanwhile the spectral sequence $E_*$ from Theorem \ref{teoremaA} associated to $T$ is
$$
H^n(S/T;H^m(T;M))^{\F_S(G)}\Rightarrow H^{n+m}(\F;M).
$$
Note that by the classical stable elements theorem \cite[XII.10.1]{CE}, attributed to Tate by Cartan and Eilenberg, $H^*(G;M)=H^*(\F;M)$ and both spectral sequences converge to the same target.
Recall that, by construction, $E_*$ is a sub-spectral sequence of the Lyndon-Hochschild-Serre spectral sequence $E_{*,S}$ of $T\rightarrow S\rightarrow S/T$.

\begin{thm}\label{theorem:sscomparison}
The spectral sequences $E_{*,G}$ and $E_*$ are isomorphic.
\end{thm}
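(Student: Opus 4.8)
The plan is to exhibit $E_*$, which is by construction the $\F_S(G)$-stable sub-spectral sequence $(E_{*,S})^{\F_S(G)}$ of the Lyndon--Hochschild--Serre spectral sequence $E_{*,S}$ of $T\to S\to S/T$, as the image of a restriction morphism from $E_{*,G}$, and to invert that morphism by transfer. First I would extend the constructions of Section \ref{section_amackey} from inclusions of subgroups of $S$ to the inclusion $S\leq G$ (with $T=S\cap K$): restriction is the evident change-of-equivariance map of double complexes $\Hom_G(\Bar_{G/K}^{n}\otimes\Bar_G^{m},M)\to\Hom_S(\Bar_{S/T}^{n}\otimes\Bar_S^{m},M)$, and the relative transfer formula (\ref{defdiagonaltwistedtransfer}) --- now with $w$ running over $G/S$ and using canonical chain maps $\Bar_G^{*}\to\Bar_S^{*}$, $\Bar_{G/K}^{*}\to\Bar_{S/T}^{*}$ built from compatible coset representatives --- gives a map back. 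Both are maps of double complexes and hence induce morphisms of column-filtration spectral sequences $\mathrm{res}\colon E_{*,G}\to E_{*,S}$ and $\mathrm{tr}\colon E_{*,S}\to E_{*,G}$.

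Next I would verify that $\mathrm{res}$ factors through $E_*$. Any $\varphi\in\Hom_{\F_S(G)}(P,S)$ is $c_g|_P$ for some $g\in G$; since $K\unlhd G$, conjugation $c_g$ preserves $K$ and so induces a self-map of $E_{*,G}$, which is the identity because a group acts trivially on its own Lyndon--Hochschild--Serre spectral sequence. As the composites $P\xrightarrow{c_g|_P}{}^{g}P\hookrightarrow G$ and $P\hookrightarrow G\xrightarrow{c_g}G$ are equal group homomorphisms, functoriality gives $\varphi^{*}\circ\mathrm{res}^G_S=\mathrm{res}^G_P=\mathrm{res}^S_P\circ\mathrm{res}^G_S$ on every page; that is, every element of $\mathrm{im}(\mathrm{res}^G_S)$ is $\F$-stable. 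So we obtain a morphism of spectral sequences $\mathrm{res}\colon E_{*,G}\to E_*$.

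The two ingredients that make $\mathrm{res}$ an isomorphism are the Mackey relations (i) $\mathrm{tr}\circ\mathrm{res}=[G:S]\cdot\mathrm{id}_{E_{*,G}}$ and (ii) $\mathrm{res}\circ\mathrm{tr}=A(\Omega)$ on $E_{*,S}$, where $\Omega$ is the $(S,S)$-biset $G$ (left and right multiplication) acting through the cohomological Mackey functor structure on each page of $E_{*,S}$ furnished by Proposition \ref{hpqisMackey} via (\ref{transitiveomegaacting}) and (\ref{omegaacting}). Identity (ii) is precisely the double coset formula for the pair of subgroups $S,S\leq G$, proved just as condition (3) of Proposition \ref{hpqisMackey} but at the level of the double complexes of $(S,T)$ and $(G,K)$; identity (i) says $\mathrm{cor}^G_S\circ\mathrm{res}^G_S$ is multiplication by $[G:S]$, which holds on the whole spectral sequence by multiplicativity since $\mathrm{cor}^G_S(1)=[G:S]$ lies in bottom filtration. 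Now $\Omega$ is an $\F_S(G)$-stable biset in the sense of Proposition \ref{BLO2stablebiset}: each transitive component is $S\times_{c_g}S$ with $c_g\in\Hom_{\F_S(G)}$; for $\varphi=c_g|_P$, right multiplication by $g$ gives an isomorphism of $(P,S)$-bisets $\Omega_\varphi\cong\Omega_P$; and $|\Omega|/|S|=[G:S]\equiv 1\pmod p$ as $S\in\Syl_p(G)$. By Lemma \ref{lemmabisets}(4), $\tfrac{1}{[G:S]}A(\Omega)$ has image $E_*$ and restricts to the identity on $E_*$ (for $\F$-stable $z$ one has $A(\Omega)(z)=[G:S]\cdot z$).

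Putting these together: by (i), $\tfrac{1}{[G:S]}\mathrm{tr}$ is a left inverse of $\mathrm{res}\colon E_{*,G}\to E_*$, so $\mathrm{res}$ is injective; by (ii) and the previous remark, $\mathrm{res}\circ(\tfrac{1}{[G:S]}\mathrm{tr})$ is the identity on $E_*$, so $\tfrac{1}{[G:S]}\mathrm{tr}|_{E_*}$ is a two-sided inverse of $\mathrm{res}$. Hence $\mathrm{res}\colon E_{*,G}\xrightarrow{\ \cong\ }E_*$ is an isomorphism of spectral sequences, and since both abutments are $H^{*}(G;M)=H^{*}(\F;M)$ compatibly (the stable elements theorem of Cartan--Eilenberg, already invoked above), $E_{*,G}$ and $E_*$ are isomorphic. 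The main obstacle is Step 1 together with identity (ii): one has to actually construct the relative transfer on the Lyndon--Hochschild--Serre double complexes of the \emph{large} extensions $(S,T)$ and $(G,K)$ and push through the double coset computation identifying $\mathrm{res}\circ\mathrm{tr}$ with $A(\Omega)$ on every page --- a bookkeeping exercise in the spirit of Proposition \ref{hpqisMackey}(3); everything after that is formal.
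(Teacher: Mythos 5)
Your proposal is correct and follows essentially the same route as the paper: extend the Mackey-functor restriction/transfer of Section \ref{section_amackey} to the inclusion $S\leq G$, check that restriction lands in the $\F$-stable sub-spectral sequence, and invert it via the double coset formula together with the invertibility of $[G:S]$ in $\plocz$ (the paper phrases your identity (ii) as a direct double-coset computation rather than through the biset $\Omega=G$ and Lemma \ref{lemmabisets}(4), but the content is identical). One negligible slip: $[G:S]$ need only be a $p'$-number, not $\equiv 1\pmod p$, which is all your argument actually uses since you divide by $[G:S]$ explicitly.
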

\begin{proof}
Consider the category $\F_G(G)$ with objects the subgroups of $G$ and morphisms given by $\Mor_{\F_G(G)}(H,I)=\Hom_G(H,I)$. Clearly $\F_S(G)$ is a full subcategory of $\F_G(G)$. For each subgroup $H\leq G$ we have a short exact sequence 
$$
H\cap K\rightarrow H\rightarrow \overline H=H/H\cap K.
$$
If $\varphi=c_g\colon H\rightarrow I$ is a morphism in $\F_G(G)$ then, as $K$ is normal in $G$, conjugation by $g\in G$ takes $H\cap K\rightarrow H\rightarrow \overline H$ to $I\cap K\rightarrow I\rightarrow \overline I$. Exactly the same construction of Section \ref{section_amackey} gives a cohomological Mackey functor $(A,B)\colon \F_G(G)\rightarrow \CCh^2(\plocz)$ with values $H\mapsto A^{n,m}(H)=\Hom_H(\Bar_{\overline H}^n\otimes \Bar_H^m,M)$, where $\Bar_H^*$ and $\Bar_{\overline H}^*$ are the bar resolutions for $H$ and $\overline{H}$ respectively. Moreover, for $H\leq S$, as $T=K\cap S$, we have $H\cap K=H\cap T$ and this functor over $\F_G(G)$ extends the one built in Section \ref{section_amackey} over $\F_S(G)$.

The inclusion of the short exact sequence $T\rightarrow S\rightarrow S/T$ into $K\rightarrow G\rightarrow G/K$ gives a morphism $\{res_r\}_{r\geq 2}$ of spectral sequences from $E_{*,G}$ into $E_{*,S}$. The morphism of differential graded algebras $res_2\colon E_{2,G}\rightarrow E_{2,S}$ coincides with the morphism  induced in cohomology by the functor $A$ applied to the inclusion morphism $S\leq G$ of $\F_G(G)$, $H^{*,*}(A)(\iota_S^G)$. Applying the functor $B$ to the same inclusion $S\leq G$ we get another morphism  going in the opposite direction (transfer)
$$
\xymatrix{
H^n(G/K;H^m(K;M))\ar@(ur,ul)[rrr]^{H^{n,m}(A)(\iota_S^G)} &&&H^n(S/T;H^m(T;M))\ar@(dl,dr)[lll]^{H^{n,m}(B)(\iota_S^G)}.
}
$$
Recall that $E_2\leq E_{2,S}$ are exactly the $\F$-stable elements $H^p(S/T;H^q(T;M))^{\F_S(G)}$. Because conjugation by $g\in G$ induces the identity on $H^p(G/K;H^q(K;M))$ it is straightforward that $res_2(E_{2,G})\leq E_2$. Hence $\{res_r\}_{r\geq 2}$ is a morphism of spectral sequences $E_{*,G}\rightarrow E_*$. If we prove that $res_2(E_{2,G})=E_2$ then $res_2$ is an isomorphism and hence $res_r$ is an isomorphism for each $r\geq 2$ and we are done.
To see that $res_2(E_{2,G})=E_2$ we proceed as usual when there is a Mackey functor available (cf. \cite[Theorem III.10.3]{brown}). Let $z\in H^n(S/T;H^m(T;M))^{\F_S(G)}$ and consider $w=H^{n,m}(B)(\iota_S^G)(z)\in H^n(G/K;H^m(K;M))$. By the double coset formula \ref{defMackey}(3) and the cohomological condition \ref{defMackey}(4) and because $z$ is $\F_S(G)$-stable we obtain
\begin{align}
H^{n,m}(A)(\iota_S^G)(w)=&\sum_{x\in S\backslash G/S} B(\iota^S_{S\cap {}^xS})A(\iota^{{}^xS}_{S\cap {}^xS})c_{x^{-1}}^*(z)\nonumber\\
=&\sum_{x\in S\backslash G/S} B(\iota^S_{S\cap {}^xS}) A(\iota^{S}_{S\cap {}^xS})(z)\nonumber\\
=&\sum_{x\in S\backslash G/S}|S:{S\cap {}^xS}|z=|G:S|z.\nonumber
\end{align}
As $q=|G:S|$ is a $p'$-number it follows that $z=res(\frac{w}{q})$.
\end{proof}

\begin{exa}
Consider the symmetric group on $6$ letters $S_6$. It has Sylow $2$-subgroup $S=C_2\times D_8$ where $D_8$ is the dihedral group of order $8$. Because $A_6\unlhd S_6$, the subgroup $T=S\cap A_6=D_8$ is strongly closed in $\F=\F_S(S_6)$. In this example we describe the Lyndon-Hochshild-Serre spectral sequence of $A_6\rightarrow S_6\rightarrow C_2$ interpreted as the spectral sequence $E_*^{*,*}$ of Theorem \ref{teoremaA} applied to $\F$ and $T$. This demonstrates how the new spectral sequence works.

In the fusion system $\F$ there are three $\F$-centric an $\F$-radical subgroups, namely, $S$, $P=C_2^3$ and $Q=C_2^3$. The intersections $P\cap T$ and $Q\cap T$ are the two Klein subgroups of $T=D_8$. The automorphisms are $\Aut_\F(S)=1$ and $\Aut_\F(P)\cong\Aut_\F(Q)\cong S_3$, the symmetric group on $3$ letters. 

Denote by $E_{*,S}^{*,*}$, $E_{*,P}^{*,*}$ and $E_{*,Q}^{*,*}$ the Lyndon-Hochschild-Serre spectral sequences of the extensions $T\rightarrow S\rightarrow C_2$, $P\cap T\rightarrow P\rightarrow C_2$ and $Q\cap T\rightarrow Q\rightarrow C_2$ respectively. All  three extensions are direct products and hence all differentials are $0$ and the three spectral sequences collapse at the second page. In particular, the ring $H^*(S;\FF_2)$ is isomorphic as a ring to $E^{*,*}_{2,S}$ and hence $H^*(S_6;\FF_2)$ is isomorphic as a ring to $E_2^{*,*}$. Moreover, for the invariants we have
\begin{equation}\label{equ:invariantsexample}
E_2^{*,*}={E_{2,S}^{*,*}}^\F=E_{2,S}^{*,*}\cap (res^S_P)^{-1}({E_{2,P}^{*,*}}^{S_3})\cap
(res^S_Q)^{-1}({E_{2,Q}^{*,*}}^{S_3})\nonumber,
\end{equation}
because it is enough to consider invariants with respect to $\F$-centric and $\F$-radical subgroups by Alperin's fusion theorem. Here, $res^S_P\colon E_{2,S}^{*,*}\rightarrow E_{2,P}^{*,*}$ and $res^S_Q\colon E_{2,S}^{*,*}\rightarrow E_{2,Q}^{*,*}$ are the restriction maps. Denoting by subscripts the degrees we have the following:
\begin{align}
E^{*,*}_{2,S}=&H^*(D_8;\FF_2)\otimes H^*(C_2;\FF_2)=\FF_2[x_1,y_1,w_2]/(xy)\otimes \FF_2[u],\nonumber\\
E^{*,*}_{2,P}=&H^*(C_2^2;\FF_2)\otimes H^*(C_2;\FF_2)=\FF_2[x_1,x'_1]\otimes \FF_2[u],\textit{ and}\nonumber\\
E^{*,*}_{2,Q}=&H^*(C_2^2;\FF_2)\otimes H^*(C_2;\FF_2)=\FF_2[y_1,y'_1]\otimes \FF_2[u]\nonumber.
\end{align}
Restrictions are given by 
\begin{align}
res^S_P(x)=x\textit{, }&res^S_P(y)=0\textit{, } res^S_P(w)=xx'+x'^2 \textit{, }res^S_P(u)=u\textit{ and }\nonumber\\
res^S_Q(x)=0\textit{, }&res^S_Q(y)=y\textit{, } res^S_Q(w)=yy'+y'^2\textit{, }res^S_Q(u)=u\nonumber.
\end{align}
Now $S_3=\Aut_{S_6}(P)$ acts on $P\cap T=C_2^2$ and on the quotient $C_2=P/P\cap T$. The induced action on $H^*(C_2^2;\FF_2)$ is the natural one and on $H^*(C_2;\FF_2)$ the only possibility is the trivial action. Hence, the invariants are given by 
$$
{E_{2,P}^{*,*}}^{S_3}=\FF_2[x_1,x'_1]^{S_3}\otimes \FF_2[u]=\FF_2[d_2,d_3]\otimes \FF_2[u],
$$
where $d_2=x^2+x'^2+xx'$ and $d_3=(x+x')xx'$ are the Dickson's invariants. Analogously, we have that
$$
{E_{2,Q}^{*,*}}^{S_3}=\FF_2[e_2,e_3]\otimes \FF_2[u]
$$
with $e_2=y^2+y'^2+yy'$ and $e_3=(y+y')yy'$. It is straightforward that
$$
d_2=res^S_P(x^2+w)\textit{, }d_3=res^S_P(xw)\textit{, } e_2=res^S_Q(y^2+w)\textit{ and } e_3=res^S_Q(yw).
$$
From this, it is immediate that $\FF_2[x^2+y^2+w,xw,yw]\otimes \FF_2[u]\subseteq E_2^{*,*}$.

To check the reversed inclusion we first consider stable elements in the polynomial algebras $\FF_2[x,w]$ and $\FF_2[y,w]$. As in \cite[Lemma 1.4.6]{Long2008}, the restrictions $res^S_P|_{\FF_2[x,w]}$ and $res^S_Q|_{\FF_2[y,w]}$ are injective and so $\FF_2[x,w]\cap {(res^S_P)^{-1}({E_{2,P}^{*,*}}^{S_3})}=\FF_2[x^2+w,xw]$ and $\FF_2[y,w]\cap {(res^S_Q)^{-1}({E_{2,Q}^{*,*}}^{S_3})}=\FF_2[y^2+w,yw]$. A class $v$ of $H^n(D_8;\FF_2)$ can be written as follows, where we set $k=[\frac{n}{2}]$:
$$
v=\sum_{i=0}^{k}\alpha_iw^ix^{n-2i}+\beta_iw^iy^{n-2i}.
$$
From the discussion above we have that if $v$ is $\F$-invariant then
$$
v=\epsilon w^k+\sum_{2i+3j=n}\gamma_i(x^2+w)^i(xw)^j+\delta_i(y^2+w)^i(yw)^j,
$$
where $\epsilon=0$ for $n$ odd and $\epsilon=\alpha_k+\beta_k=\gamma_k=\delta_k$ if $n$ is even. If $n$ is odd, then the equalities $(x^2+y^2+w)(xw)=(x^2+w)(xw)$ and $(x^2+y^2+w)(yw)=(y^2+w)(yw)$ give that $v\in \FF_2[x^2+y^2+w,xw,yw]$. If $n$ is even, then the only term left to consider is 
$$
\gamma_k(x^2+w)^k+\delta_k(y^2+w)^k+\epsilon w^k=\gamma_k((x^2+w)^k+(y^2+w)^k+w^k)
$$
and an easy induction shows that $(x^2+w)^k+(y^2+w)^k+w^k=(x^2+y^2+w)^k$.
So  $E_2^{*,*}=\FF_2[x^2+y^2+w,xw,yw]\otimes \FF_2[u]$. The corner of $E_2^{*,*}$ is 
$$
\xymatrix@=1pt{
&\\
&xw, yw& xwu,ywu&xwu^2,ywu^2\\
&x^2+y^2+w&(x^2+y^2+w)u&(x^2+y^2+w)u^2\\
&0&0&0\\
&1& u& u^2&\\
\ar@{.}"1,2";"6,2"\ar@{.}"5,1";"5,5"&
}
$$
and we deduce that $H^*(S_6;\FF_2)=\FF_2[u_1,a_2,b_3,c_3]/(bc)$ with generators $a=x^2+y^2+w$, $b=xw$ and $c=yw$.
\end{exa}

\section{Stallings' Theorem.}\label{section_Stallings}

Associated to every first quadrant spectral sequence there is a five terms exact sequence. In the case of the Lyndon-Hochschild-Serre spectral sequence for $K\unlhd G$ and the $G$-module $M$ we obtain the inflation-restriction exact sequence:
\begin{equation}\footnotesize\label{5termsesgroup}
0\rightarrow H^1(G/K;M^K)\rightarrow H^1(G;M)\rightarrow H^1(K,M)^{G/K}\rightarrow H^2(G/K;M^K)\rightarrow H^2(G;M),
\end{equation}
where the second arrow from the right is the transgression. Before introducing the five terms exact sequence for the spectral sequence of Theorem \ref{teoremaA} we introduce some notation. So let $\F$ be a fusion system over the $p$-group $S$ with a strongly closed $\F$-subgroup $T$. Set $[T,\F]=\langle [t,\varphi]| t\in T\textit{, }\varphi\in \Hom_\F(\langle u\rangle,T)\rangle\leq T$, where $[t,\varphi]=t\varphi(t^{-1})$, $T^p=\langle t^p, t\in T\rangle$, which is characteristic in $T$, and the commutator subgroup $[T,S]=\langle t^{-1}s^{-1}ts|t\in T\textit{ and } s\in S\rangle\unlhd T$. Because the element-wise product $T^p[T,S]$ is a normal subgroup of $T$, the element-wise product $T^p[S,T]R$ is a subgroup of $T$ for any $R\leq T$. For instance, $T^p[T,S][T,\F]=T^p[T,\F]\leq T$.

The five terms exact sequence for the spectral sequence of Theorem \ref{teoremaA} for $\F$, $T$ and the $\plocz$-module $M$ with trivial $S$-action is the following:
\begin{equation}\small\label{5termses}
0\rightarrow H^1(S/T;M)^\F\rightarrow H^1(\F;M)\rightarrow H^1(T;M)^{\F}\rightarrow H^2(S/T;M)^\F\rightarrow H^2(\F;M),
\end{equation}
where the arrow $H^1(T;M)^{\F}\rightarrow H^2(S/T;M)^\F$ is the transgression. 

For coefficients $M=\FF_p$ we have 
\begin{equation}\label{equdiagram2}
H^1(\F;\FF_p)=H^1(S;\FF_p)^\F=\Hom(S/S^p[S,\F],\FF_p)
\end{equation}
and
\begin{equation}\label{equdiagram3}
H^1(T;\FF_p)^\F=\Hom(T/T^p[T,\F],\FF_p).
\end{equation}
We also have
\begin{equation}\label{equdiagram4}
H^1(S/T;M)^\F=H^1(S/T;M)^{\F/T}\textit{ and } H^2(S/T;M)^\F=H^2(S/T;M)^{\F/T}
\end{equation}
by Remark \ref{remark_Cravenquotient}. 

If $\F_i$ is a fusion system over the $p$-group $S_i$ for $i=1,2$, a homomorphism of groups $\phi:S_1\rightarrow S_2$ is fusion preserving if for each $\varphi\in \Hom_{\F_1}(P,S_1)$ there exists $\hat\varphi\in \Hom_{\F_2}(\phi(P),S_2)$ such that $\phi\circ \varphi=\hat\varphi\circ \phi$. It is easy to see that such a homomorphism induces a map in cohomology $H^*(\F_2;\FF_p)\rightarrow H^*(\F_1;\FF_p)$. In fact, by the work of Ragnarsson \cite{Kari}, it induces a map even at the level of stable classifying spaces. Assume, in addition, that $\phi$ induces a map of short exact sequences
$$
\xymatrix{
T_1\ar[r]\ar[d]&S_1\ar[r]\ar[d]^{\phi}&S_1/T_1\ar[d]\\
T_2\ar[r]&S_2\ar[r]&S_2/T_2,\\
}
$$
where $T_i$ is strongly closed in $S_i$ with respecto to $\F_i$ for $i=1,2$. This is equivalent to assume that $\phi(T_1)\leq T_2$. Denote by $E_i$ the spectral sequence from Theorem \ref{teoremaA} applied to the strongly closed subgroup $T_i$ for $i=1,2$. Then $\phi$ induces a morphism of spectral sequences $E_2\rightarrow E_1$ and, in particular, a map of five terms exact sequences.
\begin{thm}[\cite{stallings}]\label{thm:stallings}
Let $\F_i$ be a fusion system over the $p$-group $S_i$ for $i=1,2$ and let $\phi\colon S_1\rightarrow S_2$ be a fusion preserving homomorphism. Define $S_{i,0}=S_i$ and $S_{i,n+1}=S_{i,n}^p[S_{i,n},\F_i]$ for $i=1,2$ and $n\geq 0$. If the induced map in cohomology $H^i(\F_2;\FF_p)\rightarrow H^i(\F_1;\FF_p)$ is isomorphism for $i=1$ and monomorphism for $i=2$ then $S_1/S_{1,n}\cong S_2/S_{2,n}$ for each $n\geq 1$. In particular, for $n$ big enough we obtain that $S_1/\O^p_{\F_1}(S_1)\cong S_2/\O^p_{\F_2}(S_2)$.
\end{thm}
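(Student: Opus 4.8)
The plan is to run the classical argument of Stallings, but with the five terms exact sequence \eqref{5termses} of the spectral sequence of Theorem \ref{teoremaA} in place of the one coming from Lyndon--Hochschild--Serre. One fusion-theoretic fact has to be in place first: for any saturated fusion system $\G$ over a $p$-group $R$ one has $[O^p_\G(R),\G]=O^p_\G(R)$ (hence also $O^p_\G(R)^p[O^p_\G(R),\G]=O^p_\G(R)$). Granting this, a short induction shows $S_{i,n}\geq O^p_{\F_i}(S_i)$ for all $n$: indeed $S_{i,1}=S_i^p[S_i,\F_i]\geq[S_i,\F_i]\geq[O^p_{\F_i}(S_i),\F_i]=O^p_{\F_i}(S_i)$, and if $S_{i,n}\geq O^p_{\F_i}(S_i)$ then $S_{i,n+1}\geq[S_{i,n},\F_i]\geq[O^p_{\F_i}(S_i),\F_i]=O^p_{\F_i}(S_i)$; a similar induction, carried out inside $S_i/O^p_{\F_i}(S_i)$ where the relevant subgroups are visibly normal, shows each $S_{i,n}$ is strongly $\F_i$-closed. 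Since $O^p_{\F_i}(S_i)$ is the smallest strongly closed subgroup with $p$-group quotient, it follows that $\F_i/S_{i,n}=\F_{S_i/S_{i,n}}(S_i/S_{i,n})$, so $H^*(\F_i/S_{i,n};\FF_p)=H^*(S_i/S_{i,n};\FF_p)$ and, by \eqref{equdiagram2}, $H^1(\F_i/S_{i,n};\FF_p)=H^1(\F_i;\FF_p)$. It is this identification that lets the induction be run entirely with group isomorphisms, exactly as in the group case. The case $n=1$ is now immediate: by \eqref{equdiagram2} the hypothesis on $H^1$ says that $\Hom(S_2/S_{2,1},\FF_p)\to\Hom(S_1/S_{1,1},\FF_p)$ is an isomorphism, and dualizing these finite $\FF_p$-vector spaces gives that $\phi$ induces an isomorphism $S_1/S_{1,1}\cong S_2/S_{2,1}$.

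For the inductive step, assume $\phi$ induces an isomorphism $S_1/S_{1,n}\cong S_2/S_{2,n}$. Apply \eqref{5termses} to $(\F_i,S_{i,n},\FF_p)$; using \eqref{equdiagram3} to identify the term $H^1(S_{i,n};\FF_p)^{\F_i}$ with $\Hom(S_{i,n}/S_{i,n+1},\FF_p)$, and \eqref{equdiagram4} together with the previous paragraph to identify $H^j(S_i/S_{i,n};\FF_p)^{\F_i}$ with $H^j(S_i/S_{i,n};\FF_p)$ and the first term with $H^1(\F_i;\FF_p)$, the map of five terms sequences induced by $\phi$ becomes a commuting ladder whose outer vertical maps are isomorphisms in the first, second and fourth positions (the first two being $\phi^*\colon H^1(\F_2;\FF_p)\to H^1(\F_1;\FF_p)$, the fourth induced by the group isomorphism $S_1/S_{1,n}\cong S_2/S_{2,n}$ on the cohomology of the $p$-group $S_i/S_{i,n}$) and a monomorphism in the fifth position ($\phi^*$ on $H^2$, by hypothesis). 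Two applications of the four lemma then force the remaining vertical map $\Hom(S_{2,n}/S_{2,n+1},\FF_p)\to\Hom(S_{1,n}/S_{1,n+1},\FF_p)$ to be an isomorphism, and dualizing once more gives an isomorphism $S_{1,n}/S_{1,n+1}\cong S_{2,n}/S_{2,n+1}$. Finally, $[S_{i,n},S_i]\leq[S_{i,n},\F_i]\leq S_{i,n+1}$ makes $S_{i,n}/S_{i,n+1}$ central in $S_i/S_{i,n+1}$, so the short five lemma, applied to the map of central extensions $S_{i,n}/S_{i,n+1}\hookrightarrow S_i/S_{i,n+1}\twoheadrightarrow S_i/S_{i,n}$ induced by $\phi$ (whose kernel and cokernel maps are now isomorphisms), yields an isomorphism $S_1/S_{1,n+1}\cong S_2/S_{2,n+1}$, completing the induction.

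For the last assertion, the decreasing series $S_{i,0}\geq S_{i,1}\geq\cdots$ stabilizes (as $S_i$ is finite), say at $S_{i,\infty}$; then $S_{i,\infty}=O^p_{\F_i}(S_i)$. The inclusion $\geq$ is the bound above; for $\leq$, pass to $\bar S_i=S_i/O^p_{\F_i}(S_i)$, where $\F_i/O^p_{\F_i}(S_i)$ is the fusion system of the $p$-group $\bar S_i$ and the images of the $S_{i,n}$ form the ordinary lower $p$-central series of $\bar S_i$, which reaches the trivial subgroup. Hence for $n$ large $S_i/S_{i,n}=S_i/O^p_{\F_i}(S_i)$, and the isomorphism proved above specializes to $S_1/O^p_{\F_1}(S_1)\cong S_2/O^p_{\F_2}(S_2)$.

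The main obstacle is the identity $[O^p_\G(R),\G]=O^p_\G(R)$; everything else (the two four lemma chases, the short five lemma, the dualizing of finite vector spaces, and the bookkeeping with \eqref{equdiagram2}--\eqref{equdiagram4} and Remark \ref{remark_Cravenquotient}) is routine. The non-obvious inclusion is $O^p_\G(R)\leq[O^p_\G(R),\G]$. By standard commutator manipulations, together with the fact that $O^p$ of a finite group is generated by its $p'$-elements, it is enough to show that $[x,\alpha]\in[O^p_\G(R),\G]$ whenever $x\in P\leq R$ and $\alpha$ is a $p'$-element of $\Aut_\G(P)$. Coprime action of $\langle\alpha\rangle$ on $P$ gives a factorization $x=x_1x_2$ with $x_1\in[P,\alpha]$ and $x_2\in C_P(\alpha)$, and with the convention $[t,\varphi]=t\varphi(t^{-1})$ one computes directly that $[x,\alpha]=[x_1,\alpha]$; since $[P,\alpha]\leq[P,O^p(\Aut_\G(P))]\leq O^p_\G(R)$ is $\alpha$-invariant, the restriction $\alpha|_{\langle x_1\rangle}$ is a $\G$-morphism $\langle x_1\rangle\to O^p_\G(R)$, so $[x,\alpha]=[x_1,\alpha|_{\langle x_1\rangle}]\in[O^p_\G(R),\G]$ by definition.
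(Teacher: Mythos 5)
Your proof is correct and follows the same route as the paper's: induction on $n$ using the five terms exact sequence \eqref{5termses} applied to $(\F_i,S_{i,n},\FF_p)$, the identifications \eqref{equdiagram2}--\eqref{equdiagram4} (with $\F_i/S_{i,n}$ a $p$-group so that all classes on the quotient are stable), the four/five lemma to get that $\Hom(S_{2,n}/S_{2,n+1},\FF_p)\to\Hom(S_{1,n}/S_{1,n+1},\FF_p)$ is an isomorphism, and then dualization and the (short) five lemma to pass from $S_{1,n}/S_{1,n+1}\cong S_{2,n}/S_{2,n+1}$ to $S_1/S_{1,n+1}\cong S_2/S_{2,n+1}$; the stabilization argument in $S_i/O^p_{\F_i}(S_i)$ is also the paper's. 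The only divergence is in the auxiliary facts that each $S_{i,n}$ is strongly $\F_i$-closed, contains $O^p_{\F_i}(S_i)$, and eventually equals it: the paper imports these from \cite{tateyoshida} (Corollary A.6, Lemma A.5, Proposition A.7, Corollary A.14) and \cite{extensions}, whereas you derive them from the identity $[O^p_{\G}(R),\G]=O^p_{\G}(R)$, which you prove directly. That argument is sound --- the reduction to $p'$-elements $\alpha$ of $\Aut_\G(P)$ via the commutator identity $[x,\alpha\beta]=[x,\beta][\beta(x),\alpha]$, the coprime factorization $x=x_1x_2$ with the computation $[x,\alpha]=[x_1,\alpha]$, and the $\alpha$-invariance of $[P,\alpha]\leq O^p_\G(R)$ making $\alpha|_{\langle x_1\rangle}$ a $\G$-morphism into $O^p_\G(R)$ are all correct --- so this is a legitimate self-contained substitute for the citations, at the cost of a slightly terse justification that strong closure of $S_{i,n}$ can be checked as normality of its image in $S_i/O^p_{\F_i}(S_i)$ (which does hold, via Remark \ref{remark_Cravenquotient}).
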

\begin{proof}
We will prove by induction that $S_1/S_{1,n}\cong S_2/S_{2,n}$ and that $S_{i,n}$ is strongly $\F_i$-closed and contains $O^p_{\F_i}(S_i)$ for $i=1,2$. For the base case $n=1$, we have that $S_{i,1}$ contains $O^p_{\F_i}(S_i)$ and is strongly $\F_i$-closed by \cite[Corollary A.6]{tateyoshida} ($i=1,2$). Moreover, by hypothesis, $H^1(\F_2;\FF_p)\cong H^1(\F_1;\FF_p)$ and then by Equation (\ref{equdiagram2}) we get $S_1/S_{1,1}\cong S_2/S_{2,1}$.

Now let $n\geq 1$. As $\Phi$ is fusion preserving it is easy to see that $\phi(S_{1,n})\leq S_{2,n}$. Then we have the following map of short exact sequences:
$$
\xymatrix{
S_{1,n}\ar[r]\ar[d]&S_1\ar[r]\ar[d]^{\phi}&S_1/S_{1,n}\ar[d]\\
S_{2,n}\ar[r]&S_2\ar[r]&S_2/S_{2,n}.\\
}
$$

By the induction hypothesis, $S_{1,n}$ and $S_{2,n}$ are strongly closed in $\F_1$ and $\F_2$ respectively. Then by the discussion before the theorem we have a map of five terms short exact sequences in cohomology with trivial $\FF_p$-coefficients:
$$
\small
\xymatrix@=10pt{
0\ar[r]&H^1(S_1/S_{1,n})^{\F_1}\ar[r]\ar[d]^{f_1}&H^1(\F_1)\ar[r]\ar[d]^{g_1}&H^1(S_{1,n})^{\F_1}\ar[r]\ar[d]^{h_1}& H^2(S_1/S_{1,n})^{\F_1}\ar[r]\ar[d]^{f_2}& H^2(\F_1)\ar[d]^{g_2}\\
0\ar[r]&H^1(S_2/S_{2,n})^{\F_2}\ar[r]&H^1(\F_2)\ar[r]&H^1(S_{2,n})^{\F_2}\ar[r]& H^2(S_2/S_{2,n})^{\F_2}\ar[r]& H^2(\F_2).
}
$$
Because $O^p_{\F_i}(S_i)$ is contained in $S_{i,n}$ the quotient $\F/S_{i,n}$ is a $p$-group, i.e.,  $\F/S_{i,n}=\F_{S/S_{i,n}}(S/S_{i,n})$ for $i=1,2$. Then, by Equation (\ref{equdiagram4}), the maps $f_1$ and $f_2$ are isomorphisms as $S_1/S_{1,n}\cong S_2/S_{2,n}$. Now, by hypothesis, $g_1$ is an isomorphism and $g_2$ is a monomorphism. Hence by the five lemma $h_1$ is an isomorphism. Then by Equation (\ref{equdiagram3}) we obtain that  $S_{1,n}/S_{1,n+1}\cong S_{2,n}/S_{2,n+1}$ and hence $S_1/S_{1,n+1}\cong S_2/S_{2,n+1}$.

To finish the induction step, denote by $\F_{i,n}$ the unique p-power index fusion subsystem of $\F_i$ on $S_{i,n}$ \cite[Theorem 4.3]{extensions}. Then using \cite[Lemma A.5]{tateyoshida} we obtain that $S_{i,n+1}=S_{i,n}^p[S_{i,n},S_i]O^p_{\F_{i,n}}(S_{i,n})$ and hence, by \cite[Corollary A.14]{tateyoshida}, $S_{i,n+1}=S_{i,n}^p[S_{i,n},S_i]O^p_{\F_i}(S_i)$. Then $S_{i,n+1}$ contains $O^p_{\F_i}(S_i)$ and by \cite[Proposition A.7(1)]{tateyoshida} $S_{i,n+1}$ is strongly $\F_i$-closed for $i=1,2$.

For the second part of the statement recall that for any finite $p$-group $R$ the series $R_0=R$, $R_n=R_{n-1}^p[R_{n-1},R]$ ($n\geq 1$) becomes trivial for $n$ big enough. Then, considering the image of $S_{i,n}$ in $S_i/O^p_\F(S_i)$, it is easy to see that $S_{i,n}=O^p_\F(S_i)$ for $n$ big enough and $i=1,2$.
\end{proof}
\begin{cor}[{\cite[7.2.5]{evens}}]
Let $\F$ be a fusion system over the $p$-group $S$. If the map $H^2(\F/E^p_\F(S);\FF_p)\rightarrow H^2(\F;\FF_p)$ is a monomorphism then $S/O^p_\F(S)$  is elementary abelian.
\end{cor}
\begin{proof}
Set $\F_1=\F$ and $\F_2=\F/E^p_\F(S)$ and consider the fusion preserving quotient map $\F_1\rightarrow \F_2$. By Equation (\ref{equdiagram2}) and because $E^p_\F(S)=\Phi(S)O^p_\F(S)=S^p[S,\F]$, the quotient map induces an isomorphism in degree $1$ cohomology. Then Theorem \ref{thm:stallings} gives that $O^p_\F(S)=E^p_\F(S)$ and we are done.
\end{proof}

\begin{cor}[\cite{Tate}]
Let $\F$ be a fusion system over the $p$-group $S$. If the restriction map $H^1(\F;\FF_p)\rightarrow H^1(S;\FF_p)$ is an isomorphism then $\F=\F_S(S)$.
\end{cor}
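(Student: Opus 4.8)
The plan is to imitate Tate's cohomological argument, using the five terms exact sequence (\ref{5termses}) attached to the spectral sequence of Theorem \ref{teoremaA} in place of the one coming from Lyndon--Hochschild--Serre. Put $T=O^p_\F(S)$, so that $\F/T=\F_{S/T}(S/T)$ is a $p$-group, and write $\bar S=S/T$; the goal is to prove $T=1$, since then $\F=\F/O^p_\F(S)=\F_S(S)$. Note first that, as $\F/T=\F_{S/T}(S/T)$, every class in $H^n(\bar S;\FF_p)$ is $\F$-stable by Remark \ref{remark_Cravenquotient}, so $H^n(\bar S;\FF_p)^\F=H^n(\bar S;\FF_p)$ for all $n$.

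I would next isolate the input that does not use the hypothesis: $H^1(T;\FF_p)^\F=0$. By Equation (\ref{equdiagram3}) this is the statement $T^p[T,\F]=T$, which holds because $T=O^p_\F(S)$ is a fixed point of the operator $R\mapsto R^p[R,\F]$ --- the fusion analogue of $N^p[N,G]=N$ for $N=O^p(G)$ --- and follows from the properties of $p$-power-index subsystems used in the proof of Theorem \ref{thm:stallings} (cf.\ \cite[Lemma A.5, Corollary A.14]{tateyoshida}). In addition $E^p_\F(S)=\Phi(S)O^p_\F(S)=S^p[S,\F]$ contains $T$, so Equation (\ref{equdiagram2}) shows that both $H^1(\bar S;\FF_p)$ and $H^1(\F;\FF_p)$ equal $\Hom(S/E^p_\F(S),\FF_p)$ and the inflation between them is an isomorphism. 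Plugging these two facts into (\ref{5termses}) for $\F$ and $T$: the first term maps isomorphically onto the second, hence the transgression $H^1(T;\FF_p)^\F\to H^2(\bar S;\FF_p)$ is injective, and as its source vanishes the edge map $\inf\colon H^2(\bar S;\FF_p)=H^2(\bar S;\FF_p)^\F\to H^2(\F;\FF_p)$ is a monomorphism.

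Now I would bring in the hypothesis through the comparison with the classical inflation--restriction sequence of $T\to S\to\bar S$, which is (\ref{5termses}) for $\F_S(S)$ and into which (\ref{5termses}) for $\F$ embeds, the latter spectral sequence being by construction a sub-spectral sequence of the Lyndon--Hochschild--Serre spectral sequence of $T\to S\to\bar S$. The hypothesis is precisely $E^p_\F(S)=\Phi(S)$, i.e.\ $T\le\Phi(S)$, so the inflation $H^1(\bar S;\FF_p)\to H^1(S;\FF_p)$ is an isomorphism and therefore in the classical sequence the transgression $H^1(T;\FF_p)^{\bar S}\to H^2(\bar S;\FF_p)$ is injective with image $\ker\big(\inf\colon H^2(\bar S;\FF_p)\to H^2(S;\FF_p)\big)$. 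This last $\inf$ factors as $H^2(\bar S;\FF_p)\to H^2(\F;\FF_p)\hookrightarrow H^2(S;\FF_p)$, whose first map is a monomorphism by the previous paragraph and whose second is a monomorphism because $H^*(\F;\FF_p)$ is the ring of $\F$-stable classes inside $H^*(S;\FF_p)$; hence $\inf$ is injective and $H^1(T;\FF_p)^{\bar S}=0$. Since $T\unlhd S$ are $p$-groups we have $[T,S]T^p<T$ unless $T=1$, and otherwise $H^1(T;\FF_p)^{\bar S}=\Hom(T/[T,S]T^p,\FF_p)\neq 0$; so $T=1$, completing the proof.

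The step I expect to be the real obstacle is establishing $H^1(O^p_\F(S);\FF_p)^\F=0$, equivalently $O^p_\F(S)^p[O^p_\F(S),\F]=O^p_\F(S)$; everything else is bookkeeping with five terms exact sequences and identifying the inflations and restrictions. Alternatively, the statement follows at once from Theorem \ref{thm:stallings} applied to the identity $S\to S$ viewed as a fusion preserving homomorphism $\F_S(S)\to\F$ (it is fusion preserving because $\F_S(S)\subseteq\F$): the induced maps $H^1(\F;\FF_p)\to H^1(S;\FF_p)$ and $H^2(\F;\FF_p)\to H^2(S;\FF_p)$ are an isomorphism by hypothesis and a monomorphism automatically, so $S/O^p_{\F_S(S)}(S)\cong S/O^p_\F(S)$; as $\Aut_{\F_S(S)}(P)=\Aut_S(P)$ is a $p$-group one gets $O^p_{\F_S(S)}(S)=1$, forcing $O^p_\F(S)=1$.
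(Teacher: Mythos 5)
Your closing ``alternative'' is, almost word for word, the proof the paper actually gives: apply Theorem \ref{thm:stallings} to the inclusion $\F_S(S)\subseteq\F$, observe that $H^2(\F;\FF_p)\to H^2(S;\FF_p)$ is injective for free because $H^*(\F;\FF_p)$ is by definition a subring of $H^*(S;\FF_p)$, and conclude $O^p_{\F}(S)=O^p_{\F_S(S)}(S)=1$, whence $\F=\F_S(S)$. So you certainly have a complete proof. Your main argument is a genuinely different, and in my reading correct, route: it unwinds in this special case the induction hidden inside Theorem \ref{thm:stallings} and runs Tate's original transgression argument directly, playing the five terms sequence (\ref{5termses}) for $\F$ and $T=O^p_\F(S)$ against the classical inflation--restriction sequence of $T\to S\to S/T$. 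The one input you rightly flag as nontrivial, namely $T^p[T,\F]=T$, does follow from the identity $S_{i,n}^p[S_{i,n},\F_i]=S_{i,n}^p[S_{i,n},S_i]O^p_{\F_i}(S_i)$ established in the proof of Theorem \ref{thm:stallings} together with the stabilization $S_{i,n}=O^p_{\F_i}(S_i)$ for large $n$; and the factorization of the inflation $H^2(S/T;\FF_p)\to H^2(S;\FF_p)$ through $H^2(\F;\FF_p)$ holds because the spectral sequence of Theorem \ref{teoremaA} is by construction a sub-spectral sequence of $E_{*,S}$ whose filtration on the abutment $H^*(\F;\FF_p)$ is the restriction of the filtration on $H^*(S;\FF_p)$, so the edge maps are compatible on $\F$-stable classes (here all of $H^2(S/T;\FF_p)$, since $\F/T$ is the fusion system of the $p$-group $S/T$). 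What the paper's route buys is brevity, since all the work is already packaged in Theorem \ref{thm:stallings}; what your direct route buys is that it exhibits the corollary as literally Tate's five-term-sequence argument transplanted to (\ref{5termses}) --- which is what the introduction advertises --- and it isolates exactly where the fusion-theoretic input ($T^p[T,\F]=T$ for the hyperfocal subgroup) enters.
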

\begin{proof}
Consider $\F_1=\F_S(S)$, $\F_2=\F$ and the fusion preserving morphism given by inclusion $\F_1\subseteq \F_2$. Then $H^1(\F;\FF_p)\rightarrow H^1(S;\FF_p)$ is isomorphism by hypothesis and $H^2(\F;\FF_p)\rightarrow H^2(S;\FF_p)$ is monomorphism by definition. Then from Theorem \ref{thm:stallings} we obtain $O^p_\F(S)=1$. Thus there are no $p'$-automorphisms in $\F$ and $\F=\F_S(S)$.
\end{proof}

\end{document}